\theoremstyle{definition}
\newtheorem{theorem}{Theorem}[section]
\newtheorem{remark}[theorem]{Remark}
\newtheorem{cor}[theorem]{Corollary}
\newtheorem{pro}[theorem]{Proposition}
\newtheorem{lem}[theorem]{Lemma}
\def \al {{\alpha}}
\def \R {{\mathbb R}}
\def \C {{\mathbb C}}
\def \Ll {{\mathcal L}}
\def \Ll {{\mathcal L}}
\def \la {{\lambda}}
  \author{Priyanka Grover \thanks{The research of this author is supported by INSPIRE Faculty Award IFA-14-MA52 of DST, India, and by Early Career Research Award ECR/2018/001784 of SERB, India. Email: priyanka.grover@snu.edu.in}, Veer Singh Panwar \thanks{Email: vs728@snu.edu.in}, A. Satyanarayana Reddy\thanks{Email: satyanarayana.reddy@snu.edu.in}\\
Department of Mathematics\\Shiv Nadar University, Dadri\\ U.P. 201314, India.}
  \date{}
\begin{document}
	\title{Positivity properties of some special matrices}
		\maketitle
 \begin{center}
 \large{\textbf{Abstract}}
 \end{center}
	\begin{footnotesize}
 It is shown that for positive real numbers $ 0<\lambda_{1}<\dots<\lambda_{n} $, $ \left[\frac{1}{\beta({\la_i},{\la_j})}\right]$, where $ \beta(\cdot,\cdot)$ denotes the beta function, is infinitely divisible and totally positive. For $ \left[\frac{1}{\beta({i},{j})}\right]$, the Cholesky decomposition and successive elementary bidiagonal decomposition are computed.  Let $\mathfrak w(n)$ be the $n$th Bell number. It is proved that  $\left[\mathfrak w(i+j)\right]$ is a totally positive matrix but is infinitely divisible only upto order $4$. It is also shown that the symmetrized Stirling matrices are totally positive.  
	\end{footnotesize} \\ 
	
\textit{AMS classification: } {15B48, 42A82,15B36}	 
	
	\begin{footnotesize}
		\textbf{Keywords :}	Bell numbers, infinitely divisible matrices, positive semidefinite matrices, Schur product, Stirling numbers,  totally positive matrices, the beta function.
	\end{footnotesize}
		
	\section{Introduction}
	Let $ M_n(\C) $ be the set  of all $ n\times n $ complex matrices. A matrix $A\in M_n(\C)$ is said to be {\em positive semidefinite} if 	$x^*Ax \geq 0$ for all $x \in \C^n$ and {\em positive definite}  if $x^*Ax > 0$ for all $x \in \C^n, x\neq 0.$	Let $A=[a_{ij}]$ and $B=[b_{ij}]$. In this paper, $1\leq i,j\leq n$, unless otherwise specified. The {\em Hadamard product} or  {\em the Schur product}  of two matrices $ A $ and $ B $ is denoted by $A\circ B,$ where $ A\circ B = [a_{ij}b_{ij}].$  For a nonnegative real number $r$,   $A^{\circ r}= [a_{ij}^r].$  

	Let $A=[a_{ij}]$ be  such that $a_{ij}\ge 0$.  The matrix $A$ is called {\em infinitely divisible} if $A^{\circ r}$ is positive semidefinite for every real number $r>0$.	
	For examples and properties of infinitely divisible matrices, see~\cite{R.Bhatia,Mean Matrices, horn}.  The  matrix $A$ is called  {\em totally positive} or  {\em totally nonnegative} if all its minors are positive or nonnegative respectively. For more results on these, see~\cite{fallat}.  The main objective of this paper is to explore the above mentioned properties for a few matrices which are constructed from interesting functions. Many such matrices have been studied in \cite{R.Bhatia, Mean Matrices, bhatia2, bhatiajain}. A famous example of such a matrix is the Hilbert matrix $\left[\frac{1}{i+j-1}\right]$. Another important example is the Pascal matrix $\mathcal P=\left[\binom{i+j}{i}\right]_{i,j=0}^{n}$. Both of these are known to be infinitely divisible and totally positive. In \cite{strang}, \emph{Cholesky decomposition} of $\mathcal P$ was given, that is, a lower triangular matrix $L$  was obtained such that $\mathcal P=LL^*$.

Let $\mathcal{B}=[\frac{1}{\beta(
{i},
{j})}]$, where  $ \beta(\cdot,\cdot) $ is the \textit{beta function}. We call $ \mathcal{B} $ as the \textit{beta matrix}. By definition,  
$${\beta(i,j)} = \frac{\Gamma (i) \Gamma (j)}{\Gamma(i+j)},$$ where $\Gamma(\cdot)$ is the Gamma function. Thus
$$\mathcal B= \left[\frac{(i+j-1)!}{(i-1)!(j-1)!}\right].$$ Note that the entries of this matrix look similar to those of $\mathcal P$. 
Using the infinite divisibility and total positivity of $\mathcal P$, we  show that $\mathcal B$ is infinitely divisible and totally positive, respectively. 
We also compute the {Cholesky decomposition} of $\mathcal B$. 

Let $E_{i,j}$ be the matrix whose $(i,j)$th entry is $1$ and others are zero. For any complex numbers $ s,t, $ let\begin{align*}
L_{i}(s) = I+sE_{i,i-1} \,\, \text{and}\,\, U_{j}(t) = I + tE_{j-1,j},
\end{align*}
where $ 2 \leq i,j \leq n $. Matrices of the form $ L_{i}(s) $ or $ U_{j}(t) $  are called \textit{ elementary bidiagonal matrices}. For a vector $(d_1,d_2,\ldots,d_n)$, let $\text{diag}([d_i])$ denote the diagonal matrix $\text{diag}(d_1 ,\ldots, d_n)$.
An $n\times n$ matrix is totally positive if and only if it can be written as $(L_n\left(l_{k}\right)L_{n-1}\left(l_{k-1}\right)\cdots L_2\left(l_{k-n+2}\right))\\ (L_n\left(l_{k-n+1}\right)L_{n-1}\left(l_{k-n}\right) \cdots L_3\left(l_{k-2n+4}\right))\cdots(L_n\left(l_{1}\right))D (U_n\left(u_{1}\right)) (U_{n-1}\left(u_{2}\right) U_n\left(u_{3}\right))\cdots (U_2\left(u_{k-n+2}\right)\cdots \\ U_{n-1}\left(u_{k-1}\right)U_n\left(u_{k}\right)),$ where $k=\binom{n}{2},\  l_i,u_j>0$ for all $i,j\in\{1,2,\ldots,k\}$ and $D=\text{diag}[(d_i)]$ is a diagonal matrix with all $d_i > 0$ \cite[Corollary 2.2.3]{fallat}. This particular factorization (with $l_i,u_j\geq 0$) is called \textit{successive elementary bidiagonal (SEB) factorization} or \textit{Neville factorization}.  
 We obtain the SEB factorization for $ \left[\frac{1}{\beta(i,j)}\right] $ explicitly.
 
Let $(x)_{0}=1$ and for a positive integer $k$, let $(x)_{k} = x(x-1)(x-2)\cdots (x-k+1)$.
The {\it Stirling numbers of first kind} $s(n,k)$ ~\cite[p. 213]{comtet} and the {\it Stirling numbers of second kind} $S(n,k)$ ~\cite[p. 207]{comtet} are respectively defined as $$(x)_{n} = \sum_{k=0}^{n}s(n,k)x^{k}$$
 and  $$x^n= \sum_{k=0}^{n}S(n,k)(x)_{k}.$$ 
 The {\it unsigned Stirling matrix of first kind} $\mathfrak{s}=[\mathfrak{s}_{ij}]$ is defined as 
 $$\mathfrak s_{ij}=\begin{cases}
    (-1)^{i-j}s(i,j) & \text{if $i\geq j$},\\
    0 & \text{otherwise}.
  \end{cases}$$ 
  The {\it Stirling matrix of second kind}  $\mathcal{S}=[\mathcal{S}_{ij}]$ is defined as 
  $$\mathcal S_{ij}=\begin{cases}
    S(i,j) & \text{if $i\geq j$},\\
    0 & \text{otherwise}.
  \end{cases}$$
  It is a well known fact that $\mathfrak{s}$ and $\mathcal S$ are totally nonnegative (see for example \cite{galvin}). We consider the matrices $\mathfrak{s} \mathfrak{s}^*$ and $\mathcal S \mathcal S^*$ and call them {\it  symmetrized unsigned Stirling matrix of first kind} and {\it symmetrized Stirling matrix of second kind}, respectively. By definition, these are positive semidefinite and totally nonnegative. We show that both these matrices are in fact totally positive.

Another matrix that we consider is formed by the well known {\em Bell numbers}. 
The sum $\mathfrak {w}({n}) = \sum_{k=0}^{n}S(n,k)$ is the number of partitions of a set of $n$ objects and is known as the {$n$th Bell number} \cite[p. 210]{comtet}. Consider the matrix $\mathfrak B=[\mathfrak {w}(i+j)]_{i,j=0}^{n-1}$. Let $X=(x_{ij})_{i,j=0}^{n-1}$ be the lower triangular  matrix defined recursively by
\begin{align*}x_{00} &= 1,\ x_{0j}= 0 \text{ for } j>0,\ \text{ and }x_{ij} = x_{i-1,j-1}+(j+1)x_{i-1,j}+(j+1)x_{i-1,j+1} \text{ for } i\geq 1.\end{align*} (Here $x_{i,-1}=0$ for every $i$ and $x_{in}=0$ for $i=0,\ldots, n-2$.)
This is  known as the Bell triangle \cite{Xi chen}. 
Lemma 2 in \cite{Martin} gives the Cholesky decomposition of $\mathfrak B$ as $LL^*$, where $L=X \ \text{diag}\left( \left[\sqrt{i!}\right]\right)_{i=0}^{n-1}$. It is also shown in \cite{Martin} that $\det \mathfrak B=\prod_{i=0}^{n-1}  i!$. It is a known fact that $\mathfrak B$ is totally nonnegative \cite{Liang}. We show that $\mathfrak B$ is totally positive. We also show that $\mathfrak B$ is infinitely divisible only upto order $4$.


 
 In Section 2 we give our results for the beta matrix, namely, its infinite divisibility and total positivity, its Cholesky decomposition, its determinant, ${\mathcal B}^{-1}$, and its SEB factorization.  We show that $\mathcal B^{\circ r}$ is in fact totally positive for all $r>0$. We end the discussion on the beta matrix by proving  that for positive real numbers $\lambda_1< \cdots< \lambda_n$ and  $\mu_1<\cdots<\mu_n$, $\left[\frac{1}{\beta(\lambda_i,\lambda_j)}\right]$ is an infinitely divisible matrix and  $\left[\frac{1}{\beta(\lambda_i,\mu_j)}\right]$ is a totally positive matrix.
 In section $ 3 $, we prove that symmetrized Stirling matrices and $\mathfrak B$ are totally positive. For the first kind, we give the SEB factorization for  $\mathfrak{s}$. For the second kind, we show that $\mathcal S$ is {\em triangular totally positive} \cite[p. 3]{fallat}. We also show that $\mathfrak B$ is infinitely divisible if and only if its order is less than or equal to  $4. $

\section{The beta matrix}\label{section:Beta}
The  infinite divisibility and total positivity of $\mathcal B$ are easy consequences of the corresponding results for $\mathcal P$.   
For $1\leq i,j\leq n$, let $A(i,j)$ denote the submatrix of $A$ obtained by deleting $i$th row and $j$th column from $A$. 
 Each $A(i,i)$ is infinitely divisible, if $A$ is infinitely divisible, and each $A(i,j)$ is totally positive, if $A$ is totally positive.
\begin{theorem}\label{thm:2}
The matrix $\mathcal B=\left[\frac{1}{\beta(i,j)}\right]$ is  infinitely divisible and totally positive.
\end{theorem}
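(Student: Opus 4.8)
The key observation, as the authors hint, is that the beta matrix

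$$\mathcal B = \left[\frac{(i+j-1)!}{(i-1)!(j-1)!}\right] = \left[\binom{i+j-1}{i-1}\right]_{i,j=1}^{n}$$

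is obtained from the Pascal-type matrix $\left[\binom{i+j}{i}\right]$ by a shift of indices, together with a diagonal congruence. Specifically, I would write $\mathcal B = D\,\mathcal P'\,D$ where $\mathcal P' = \left[\binom{(i-1)+(j-1)}{i-1}\right]_{i,j=1}^{n}$ is a principal submatrix of the Pascal matrix $\mathcal P = \left[\binom{i+j}{i}\right]_{i,j=0}^{n}$ (namely the one indexed by $0,1,\ldots,n-1$) — actually on closer inspection $\binom{i+j-1}{i-1} = \binom{(i-1)+j}{i-1}$, so the right bookkeeping is $\mathcal B_{ij} = \binom{(i-1)+(j-1)+1}{i-1}$; in any case $\mathcal B$ is a diagonal rescaling of a submatrix of $\mathcal P$, and I would first pin down this identity cleanly by comparing entries.

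\smallskip

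For \textbf{total positivity}: a principal submatrix of a totally positive matrix formed by a contiguous block of rows and columns is again totally positive, and multiplying on both sides by a positive diagonal matrix multiplies each minor by a positive factor, hence preserves total positivity. Since $\mathcal P$ is known to be totally positive (as cited), $\mathcal B$ is totally positive. One technical point to verify: the relevant block of $\mathcal P$ should be shown to be contiguous, or alternatively one can invoke that for the (finite) Pascal matrix every submatrix is totally positive; I would state whichever version is available from the cited source on $\mathcal P$.

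\smallskip

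For \textbf{infinite divisibility}: for each $r>0$ we have $\mathcal B^{\circ r} = D^{\circ r}\,(\mathcal P')^{\circ r}\,D^{\circ r}$, and $(\mathcal P')^{\circ r}$ is a principal submatrix of $\mathcal P^{\circ r}$. Since $\mathcal P$ is infinitely divisible, $\mathcal P^{\circ r}\succeq 0$, hence so is every principal submatrix, and congruence by the diagonal matrix $D^{\circ r}$ preserves positive semidefiniteness. Therefore $\mathcal B^{\circ r}\succeq 0$ for all $r>0$, i.e. $\mathcal B$ is infinitely divisible.

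\smallskip

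The main (minor) obstacle is purely an index-bookkeeping one: getting the shift and the diagonal scaling exactly right so that $\mathcal B$ genuinely sits inside $\mathcal P$ (or $\mathcal P$ with indices starting at $1$) as a diagonal congruence of a principal submatrix. Once that identity is written down, both conclusions follow immediately from the stated properties of $\mathcal P$ together with the two boxed facts preceding the theorem (that principal submatrices inherit infinite divisibility and submatrices inherit total positivity) and the elementary fact that positive diagonal congruence preserves both properties. I do not expect any analytic difficulty; the Gamma-function identity $\beta(i,j)=\frac{(i-1)!(j-1)!}{(i+j-1)!}$ is already recorded in the excerpt.
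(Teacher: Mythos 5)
Your total positivity argument is essentially the paper's: the correct identity is $\frac{1}{\beta(i,j)}=j\binom{(i-1)+j}{i-1}$, so $\mathcal B$ is a contiguous (non-principal) submatrix of the Pascal matrix, namely $\mathcal P(n+1,1)$, multiplied on one side by the positive diagonal matrix $\mathrm{diag}([j])$; since any submatrix of a totally positive matrix is totally positive and one-sided multiplication by a positive diagonal matrix rescales every minor by a positive factor, that part goes through once the bookkeeping is fixed.

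The infinite divisibility part, however, has a genuine gap. Your plan requires $\mathcal B=D\,\mathcal P'\,D$ with $D$ positive diagonal and $\mathcal P'$ a principal submatrix of $\mathcal P$, and no such decomposition exists: matching numerators forces the Pascal parameters to satisfy $a_i+a_j=i+j-1$, i.e.\ $a_i=i-\tfrac12$, so the only symmetric diagonal congruence lands on the \emph{generalized} Pascal matrix with half-integer parameters (this is exactly the paper's Remark 2.3, which relies on the separate theorem that the generalized Pascal matrix is infinitely divisible). With integer indices the exact identity is
\begin{equation*}
\frac{1}{\beta(i,j)}=\frac{ij}{i+j}\binom{i+j}{i}=\frac{1}{\frac{1}{i}+\frac{1}{j}}\binom{i+j}{i},
\end{equation*}
and the factor $\frac{ij}{i+j}$ is not of the separable form $d_id_j$ — it is a Cauchy-type kernel, itself a nontrivial infinitely divisible matrix. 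The one-sided identity $\mathcal B=\mathcal P(n+1,1)\,\mathrm{diag}([j])$ that rescues total positivity is useless here, because one-sided diagonal multiplication destroys symmetry and does not preserve positive semidefiniteness. The paper closes this gap by writing $\mathcal B=C\circ\mathcal P(1,1)$ with $C=\left[\frac{1}{\frac1i+\frac1j}\right]$ and invoking that the Hadamard product of infinitely divisible matrices is infinitely divisible; you need either this Schur-product step or the generalized (half-integer) Pascal matrix, and your proposal contains neither.
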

\begin{proof}

By definition,  
\begin{equation}\frac{1}{\beta(i,j)}= \frac{(i+j-1)!}{(i-1)!(j-1)!}
			= \frac{ij(i+j)!}{(i+j)i!j!}
			=\frac{1}{\frac{1}{i}+\frac{1}{j}}.\binom{i+j}{i}.\label{eq1.1}\end{equation}
Thus $\mathcal B= C \circ \mathcal P(1,1),$
			where $ C = \left[\frac{1}{\frac{1}{i}+\frac{1}{j}}\right] $ is a Cauchy matrix. 
Both $C$ and $\mathcal P(1,1)$  are  infinitely divisible $\cite{R.Bhatia}$. Since Hadamard product of infinitely divisible matrices is infinitely divisible, we get $\mathcal B$ is infinitely divisible.

Again, note that 
\begin{equation}\frac{1}{\beta(i,j)} =\frac{(i+j-1)!}{(i-1)!(j-1)!}= j\frac{(i+j-1)!}{(i-1)!(j!)} = \binom{(i-1)+j}{i-1} j \label{eqn : Beta is Tp}.\end{equation}
So $\mathcal B$ is the product of the totally positive matrix $\mathcal P(n+1,1)$ with the positive diagonal matrix $\text{diag}([i])$. Hence $\mathcal B$ is totally positive.
\end{proof}

The below remarks were suggested by the anonymous referee.
\begin{remark}
Since for each $r>0$, $C^{\circ r}$ and $\mathcal P^{\circ r}$ are positive definite (see p. 183 of \cite{pdm}), \eqref{eq1.1} gives that $\mathcal B^{\circ r}$ is positive definite. Let $\mathcal G=\left[(i+j)!\right]_{i,j=0}^n$. Then $\mathcal G$ is a Hankel matrix. 
Since $\mathcal G^{\circ r}$ is congruent to $\mathcal P^{\circ r}$ via the positive diagonal matrix $\text{diag}([i!^r])_{i=0}^n $
,  $\mathcal G^{\circ r}$ is positive definite. The matrix $\mathcal G^{\circ r}(n+1,1)=\left[(i+j-1)!^r\right]$ is congruent to $\mathcal B^{\circ r}$, 
 via the positive diagonal matrix $\text{diag}\left[{(i-1)!^r}\right]$. So $\mathcal G^{\circ r}(n+1,1)$ is also positive definite. Hence $\mathcal G^{\circ r}$ is totally positive, by Theorem 4.4 in \cite{pinkus}. This shows that $\mathcal P^{\circ r}$ is totally positive. By \eqref{eqn : Beta is Tp}, $\mathcal B^{\circ r}$ is also totally positive. 
 \end{remark}
\begin{remark}Another proof for infinite divisibility of $\mathcal B$ can be given as follows.  For positive real numbers $\la_{1}, \la_{2}, \ldots ,\la_{n}$,  the generalized Pascal matrix is the matrix $\left[\frac{\Gamma(\lambda_{i}+\lambda_{j}+1)}{\Gamma(\lambda_{i}+1)\Gamma(\lambda_{j}+1)}\right]$. The beta matrix $\mathcal B$ is congruent to it via a positive diagonal matrix, when $\la_i=i-1/2$.  The generalized Pascal matrix is infinitely divisible \cite{R.Bhatia},  and hence so is $\mathcal B$.\end{remark}

By Theorem \ref{thm:2}, $\mathcal B$ is positive definite and so, can be written as $LL^*$. Our next theorem gives $L$ explicitly.

\begin{theorem}\label{thm:Beta:CD:I}
The matrix $[\frac{1}{\beta(i,j)}]$ has the Cholesky decomposition $LL^{*}$, where $L = [\binom{i}{j}\sqrt{j}]$.
\end{theorem}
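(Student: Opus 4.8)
The plan is to verify directly that $LL^{*}=\mathcal B$ for $L=[\binom{i}{j}\sqrt{j}]$, and then observe that this is automatically \emph{the} Cholesky decomposition: $L$ is real, it is lower triangular because $\binom{i}{j}=0$ whenever $j>i$, and its diagonal entries $L_{ii}=\sqrt{i}$ are positive; since $\mathcal B$ is positive definite by Theorem~\ref{thm:2}, uniqueness of the Cholesky factor then does the rest. So the whole problem reduces to computing a single entry of $LL^{*}$.

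First I would write, for $1\le i,j\le n$,
\[
(LL^{*})_{ij}=\sum_{k=1}^{n}L_{ik}L_{jk}=\sum_{k=1}^{n}k\binom{i}{k}\binom{j}{k},
\]
where only the terms with $k\le\min(i,j)$ survive. The one nontrivial step is the elementary identity $k\binom{i}{k}=i\binom{i-1}{k-1}$, which converts the sum into $i\sum_{k\ge 1}\binom{i-1}{k-1}\binom{j}{k}$. Reindexing with $m=k-1$ and using $\binom{j}{m+1}=\binom{j}{\,j-1-m\,}$, the Vandermonde convolution $\sum_{m}\binom{i-1}{m}\binom{j}{\,j-1-m\,}=\binom{i+j-1}{\,j-1\,}$ yields $(LL^{*})_{ij}=i\binom{i+j-1}{j-1}$. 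Finally I would simplify $i\binom{i+j-1}{j-1}=\frac{(i+j-1)!}{(i-1)!(j-1)!}=\frac{\Gamma(i+j)}{\Gamma(i)\Gamma(j)}=\frac{1}{\beta(i,j)}=\mathcal B_{ij}$ (compare \eqref{eqn : Beta is Tp}), which completes the verification.

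I do not expect a real obstacle here; the only things to get right are the reindexing and applying Vandermonde's identity with the correct top and bottom parameters. A slightly different but equivalent route would start from the classical Cholesky factorization of the Pascal matrix — with lower triangular Pascal factor $[\binom{i}{j}]$ — together with the representation $\mathcal B_{ij}=j\binom{i+j-1}{i-1}$ from \eqref{eqn : Beta is Tp}; one then has to distribute the scalar $j$ symmetrically onto the two factors, which is exactly what the square roots in $L$ accomplish. Since the direct computation above seems shortest, that is the one I would carry out.
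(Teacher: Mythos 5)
Your proof is correct and follows essentially the same route as the paper: both reduce the claim to verifying the binomial identity $\sum_{k}k\binom{i}{k}\binom{j}{k}=\frac{(i+j-1)!}{(i-1)!(j-1)!}$, which the paper establishes by a two-way counting argument (committee plus chairman) and you establish by the algebraically equivalent combination of $k\binom{i}{k}=i\binom{i-1}{k-1}$ with Vandermonde's convolution. Your additional remark that $L$ is lower triangular with positive diagonal, so that uniqueness of the Cholesky factor applies, is a small but welcome tightening.
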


\begin{proof}
 We prove the result by the combinatorial method of  two way counting. Consider a set of $ (i+j-1) $ persons. The number of ways of choosing a committee of $j$ people and a chairman of this committee is  
$$j\cdot \binom{i+j-1}{j} = \frac{(i+j-1)!}{(i-1)!(j-1!)}.$$
Another way to count the same is to separate $ (i+j-1) $ people into two groups of $ i $ and $ (j-1) $ people each. The number of ways of choosing a committee of $ j $ people from these two groups of people and then a chairman  of the committee is $j\cdot\sum_{k}^{}  \binom{i}{k}\binom{j-1}{j-k}$, where $k$ varies from $1$ to min$\{{i}, {j}\}$. Rearranging the terms in this expression, we get
\begin{align*}	 
j\cdot\sum_{k}^{}  \binom{i}{k}\binom{j-1}{j-k} &=
\sum_{k}^{}j\binom{i}{k}\binom{j-1}{j-k}\\
&=\sum_{k}^{}\binom{i}{k}j\frac{(j-1)!}{(j-k)!(k-1)!}\\
&=\sum_{k}^{}\binom{i}{k} {k}\frac{j!}{(j-k)!k!}\\
&=\sum_{k}^{}k\binom{i}{k}\binom{j}{k}.\\
\end{align*}The last expression is the $(i,j)$th entry of the matrix $LL^{*}$, where $L =[\binom{{i}}{j}\sqrt{j}]$. 
\end{proof}



\begin{cor}
The determinant of $\mathcal B$ is equal to $ n!$.
\end{cor}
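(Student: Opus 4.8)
The plan is to read off the determinant directly from the Cholesky decomposition established in Theorem \ref{thm:Beta:CD:I}. There we showed $\mathcal B = LL^*$ with $L = \left[\binom{i}{j}\sqrt{j}\right]_{1\le i,j\le n}$, so the first step is to observe that $\det \mathcal B = (\det L)(\det L^*) = |\det L|^2$, and in fact $L$ has real entries, so $\det L^* = \overline{\det L} = \det L$ and $\det \mathcal B = (\det L)^2$.

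Next I would note that $L$ is lower triangular: its $(i,j)$ entry is $\binom{i}{j}\sqrt{j}$, which vanishes whenever $j > i$. Hence $\det L$ is the product of the diagonal entries, and the diagonal entry in position $(i,i)$ is $\binom{i}{i}\sqrt{i} = \sqrt{i}$. Therefore
\[
\det L = \prod_{i=1}^{n}\sqrt{i} = \sqrt{n!},
\]
and consequently $\det \mathcal B = (\sqrt{n!})^2 = n!$.

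There is essentially no obstacle here; the corollary is an immediate bookkeeping consequence of Theorem \ref{thm:Beta:CD:I}. The only point worth stating carefully is that $L$ is genuinely triangular with the stated diagonal, so that the determinant is just the product $\sqrt{1}\cdot\sqrt{2}\cdots\sqrt{n}$. (As an alternative one could instead use identity \eqref{eqn : Beta is Tp}, writing $\mathcal B$ as the product of a unit-determinant Pascal-type minor with $\operatorname{diag}([i])$; but the Cholesky route is shorter since the factor $L$ is already in hand.)
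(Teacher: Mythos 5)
Your proof is correct and is exactly the argument the paper intends: the corollary is stated immediately after Theorem \ref{thm:Beta:CD:I} precisely so that one reads off $\det\mathcal B=(\det L)^2$ with $L$ lower triangular and diagonal entries $\sqrt{i}$, giving $n!$. Nothing further is needed.
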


\begin{cor}
	The inverse of the matrix $\mathcal B$ has $(i,j)$th entry as $(-1)^{i+j}\sum_{k=1}^{n}\binom{k}{i}\binom{k}{j}\frac{1}{k}. $ 
\end{cor}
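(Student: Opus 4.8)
The plan is to invert the Cholesky factor supplied by Theorem~\ref{thm:Beta:CD:I}. There $\mathcal{B}=LL^{*}$ with $L=[\binom{i}{j}\sqrt{j}]$, and I would write $L=\mathcal{P}_{0}\,\text{diag}([\sqrt{j}])$, where $\mathcal{P}_{0}=[\binom{i}{j}]_{i,j=1}^{n}$ is lower triangular with every diagonal entry equal to $1$, hence invertible. The one fact worth spelling out is that $\mathcal{P}_{0}^{-1}=[(-1)^{i-j}\binom{i}{j}]$. This follows from binomial inversion: using $\binom{i}{k}\binom{k}{j}=\binom{i}{j}\binom{i-j}{k-j}$ (which also shows the relevant sum is supported on $j\le k\le i$), one gets $\sum_{k}(-1)^{k-j}\binom{i}{k}\binom{k}{j}=\binom{i}{j}\sum_{m}(-1)^{m}\binom{i-j}{m}=\binom{i}{j}\,0^{\,i-j}=\delta_{ij}$, so $\mathcal{P}_{0}\cdot[(-1)^{i-j}\binom{i}{j}]=I$.

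Consequently $L^{-1}=\text{diag}([1/\sqrt{i}])\,\mathcal{P}_{0}^{-1}$ has $(k,i)$ entry $(-1)^{k-i}\binom{k}{i}/\sqrt{k}$. Since $\mathcal{B}^{-1}=(LL^{*})^{-1}=(L^{-1})^{*}L^{-1}$ and all entries in sight are real, the $(i,j)$ entry of $\mathcal{B}^{-1}$ equals
\[
\sum_{k}(L^{-1})_{ki}\,(L^{-1})_{kj}=\sum_{k}\frac{(-1)^{k-i}\,(-1)^{k-j}}{k}\binom{k}{i}\binom{k}{j}=(-1)^{i+j}\sum_{k}\frac{1}{k}\binom{k}{i}\binom{k}{j},
\]
because $(-1)^{2k-i-j}=(-1)^{i+j}$. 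As the summand vanishes unless $k\ge\max\{i,j\}$, the summation range may be written as $k=1,\dots,n$, giving precisely the stated formula.

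I do not anticipate a genuine obstacle here: once Theorem~\ref{thm:Beta:CD:I} is available, the corollary is a short manipulation, and the only substantive ingredient, the inverse of the Pascal-type matrix $\mathcal{P}_{0}$, is classical and brief enough to include inline as above so that the statement remains self-contained.
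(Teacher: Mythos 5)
Your proposal is correct and follows essentially the same route as the paper: factor the Cholesky factor as $L=[\binom{i}{j}]\,\mathrm{diag}([\sqrt{j}])$, invert the Pascal-type triangular factor via binomial inversion, and compute $\mathcal{B}^{-1}=(L^{-1})^{*}L^{-1}$ entrywise. The only difference is that you spell out the identity behind $\sum_{k}(-1)^{k-j}\binom{i}{k}\binom{k}{j}=\delta_{ij}$, which the paper simply asserts.
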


\begin{proof}
By Theorem \ref{thm:Beta:CD:I}, $\mathcal B = LL^{*}$, where $L=\left[\binom{i}{j}\sqrt{j}\right]$. Let $  Z = [\binom{i}{j}]$ and $D'=\text{diag}([\sqrt{i}])$. Then $L=ZD'$. Since $\sum_{k=1}^{n}(-1)^{k+j}\binom{i}{k}\binom{k}{j} = \delta_{ij}$, we get that $Z^{-1} = [(-1)^{i+j}\binom{i}{j}] $. So $ L^{-1}= D'^{-1}Z^{-1}=\left[(-1)^{i+j}\binom{i}{j}\frac{1}{\sqrt{i}}\right].$ Thus $\mathcal B^{-1}=L^{*^{-1}} L^{-1}$. 
This gives that the $(i,j)$th entry of $\mathcal B^{-1}$ is $$\sum_{k=1}^{n}\left((-1)^{k+i}\binom{k}{i}\frac{1}{\sqrt{k}}\right)\left((-1)^{k+j}\binom{k}{j}\frac{1}{\sqrt{k}}\right)$$ which is equal to
$$(-1)^{i+j}\sum_{k=1}^{n}\binom{k}{i}\binom{k}{j}\frac{1}{k}.$$
\end{proof}


\begin{remark}  The above theorems hold true if  $i,j$ are replaced by  $\lambda_i, \lambda_j$, where $ 0 < \lambda_{1}< \cdots <\lambda_{n} $ are positive integers. For $r>0$, the matrix $\left[\frac{1}{\beta(\la_{i},\la_{j})^r}\right]$ is  totally positive because it is a submatrix of the $\lambda_n \times \lambda_n$ matrix $\mathcal B^{\circ r}$. So $\left[\frac{1}{\beta(\la_{i},\la_{j})}\right]$ is also infinitely divisible. We have $\left[\frac{1}{\beta(\la_{i},\la_{j})}\right]=LL^*$, where $ L$ is the $n \times \la_{n}$ matrix $[\binom{\lambda_{i}}{j}\sqrt{j}]$. The proof is same as for Theorem \ref{thm:Beta:CD:I}. 
\end{remark}
The next theorem gives the SEB factorization for the matrix $\mathcal B$, which also gives another proof for $\mathcal B$ to be totally positive.

\begin{theorem}
\label{beta BD}
	The matrix  $\mathcal B =\left[\frac{1}{\beta(i,j)}\right]$ can be written as
	\begin{align*}
	 \left(L_{n}\left(\frac{n}{n-1}\right)L_{n-1}\left(\frac{n-1}{n-2}\right)\cdots L_{2}\left(2\right)\right)\left(L_{n}\left(\frac{n}{n-1}\right)\cdots L_{3}\left(\frac{3}{2}\right)\right)\ldots \left(L_{n}\left(\frac{n}{n-1}\right)\right)D\\ \left(U_{n}\left(\frac{n}{n-1}\right)\right)\ldots\left(U_{3}\left(\frac{3}{2}\right)\ldots U_{n}\left(\frac{n}{n-1}\right)\right)\left(U_{2}\left(2\right)\cdots U_{n-1}\left(\frac{n-1}{n-2}\right) U_{n}\left(\frac{n}{n-1}\right) \right),
	\end{align*}
	where $D=\text{diag}([i]).$
\end{theorem}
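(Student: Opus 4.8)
The plan is to deduce the factorization from the Cholesky decomposition of Theorem~\ref{thm:Beta:CD:I} together with a bidiagonal factorization of the Pascal matrix. First observe that $L=\left[\binom{i}{j}\sqrt{j}\right]=Z\,\text{diag}\!\left(\left[\sqrt{i}\right]\right)$, where $Z=\left[\binom{i}{j}\right]$, so Theorem~\ref{thm:Beta:CD:I} gives $\mathcal B=LL^{*}=Z\,\text{diag}([i])\,Z^{*}$. Since $Z$ is unit lower triangular and $\text{diag}([i])$ is a positive diagonal matrix, this exhibits $\mathcal B$ as (unit lower triangular)$\,\cdot\,$(positive diagonal)$\,\cdot\,$(unit upper triangular), with the middle factor $D=\text{diag}([i])$ already as required; moreover $Z^{*}$ is the transpose of $Z$ and $L_{i}(s)^{*}=U_{i}(s)$. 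Hence it suffices to show
\[
Z=G_{1}G_{2}\cdots G_{n-1},\qquad G_{j}=L_{n}\!\left(\tfrac{n}{n-1}\right)L_{n-1}\!\left(\tfrac{n-1}{n-2}\right)\cdots L_{j+1}\!\left(\tfrac{j+1}{j}\right),
\]
because taking adjoints then produces the upper half of the stated product, with $D=\text{diag}([i])$ sitting in the middle.

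The next step is to recognize $Z$ as a diagonal conjugate of the ordinary lower triangular Pascal matrix $\mathcal P_{0}=\left[\binom{i-1}{j-1}\right]_{i,j=1}^{n}$: from $\binom{i}{j}=\tfrac{i}{j}\binom{i-1}{j-1}$ one gets $Z=\Delta\,\mathcal P_{0}\,\Delta^{-1}$ with $\Delta=\text{diag}([i])$. Conjugation by a positive diagonal matrix carries a product of elementary bidiagonal matrices to another such product occupying the same positions, multiplying the scalar in the factor $L_{i}(\cdot)$ by $\Delta_{ii}/\Delta_{i-1,i-1}=\tfrac{i}{i-1}$. Thus the theorem follows once we establish the SEB factorization of $\mathcal P_{0}$ with all multipliers equal to $1$, namely
\[
\mathcal P_{0}=\bar G_{1}\bar G_{2}\cdots\bar G_{n-1},\qquad \bar G_{j}=L_{n}(1)L_{n-1}(1)\cdots L_{j+1}(1),
\]
since conjugating by $\Delta$ turns each $\bar G_{j}$ into $G_{j}$.

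I would prove the factorization of $\mathcal P_{0}$ by induction on $n$. One checks directly that $\bar G_{1}$ is the all-ones lower triangular matrix, and that for $j\geq2$ every factor $L_{i}(1)$ occurring in $\bar G_{j}$ has $i\geq3$, so $\bar G_{j}$ fixes the first coordinate and, after deletion of its first row and column, coincides with the $(n-1)\times(n-1)$ matrix $\bar G_{j-1}$. Hence $\bar G_{2}\cdots\bar G_{n-1}$ fixes the first coordinate and, on the remaining coordinates, equals the $(n-1)$-sized version of $\mathcal P_{0}$ by the inductive hypothesis; left multiplication by the all-ones lower triangular matrix $\bar G_{1}$ together with the hockey stick identity $\sum_{l=0}^{i-2}\binom{l}{m-2}=\binom{i-1}{m-1}$ then recovers $\left[\binom{i-1}{j-1}\right]$. (Alternatively, this bidiagonal factorization of the Pascal matrix may simply be quoted.) Assembling everything, $\mathcal B=(G_{1}\cdots G_{n-1})\,\text{diag}([i])\,(G_{1}\cdots G_{n-1})^{*}$, and expanding $(G_{1}\cdots G_{n-1})^{*}=G_{n-1}^{*}\cdots G_{1}^{*}$ with $G_{j}^{*}=U_{j+1}\!\left(\tfrac{j+1}{j}\right)U_{j+2}\!\left(\tfrac{j+2}{j+1}\right)\cdots U_{n}\!\left(\tfrac{n}{n-1}\right)$ gives exactly the upper triangular product in the statement.

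The main obstacle is the bookkeeping with orderings: making sure the elementary factors inside each group (indices decreasing), the order of the groups, and the arrangement on the upper side all match the Neville/SEB normal form of \cite[Corollary~2.2.3]{fallat}, and that the diagonal conjugation is applied to the SEB-ordered factorization of $\mathcal P_{0}$ rather than to some differently ordered product of bidiagonals that also equals $\mathcal P_{0}$. Once the block-peeling step in the induction is set up with the right index conventions, the remaining manipulations are routine.
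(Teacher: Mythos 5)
Your proof is correct, and it reaches the stated factorization by a genuinely different route for the key step. Both you and the paper begin from the Cholesky decomposition of Theorem \ref{thm:Beta:CD:I}, rewritten as $\mathcal B = Z\,\mathrm{diag}([i])\,Z^{*}$ with $Z=\left[\binom{i}{j}\right]$, so that everything reduces to exhibiting $Z$ as the product $\left(L_{n}\left(\frac{n}{n-1}\right)\cdots L_{2}(2)\right)\left(L_{n}\left(\frac{n}{n-1}\right)\cdots L_{3}\left(\frac{3}{2}\right)\right)\cdots\left(L_{n}\left(\frac{n}{n-1}\right)\right)$. The paper proves this identity directly by an induction on the partial products: it introduces intermediate matrices $Y_{k}$ with entries $\frac{i}{j}\binom{i-(n-k)}{j-(n-k)}$ and verifies, case by case, that left multiplication by the next block of elementary bidiagonal factors carries $Y_{k}$ to $Y_{k+1}$. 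You instead observe that $Z=\Delta\,\mathcal P_{0}\,\Delta^{-1}$ with $\Delta=\mathrm{diag}([i])$ and $\mathcal P_{0}=\left[\binom{i-1}{j-1}\right]$ the lower triangular Pascal matrix, and that conjugating $L_{i}(s)$ by $\Delta$ rescales its multiplier by $\frac{i}{i-1}$; this reduces the whole theorem to the all-ones SEB factorization of $\mathcal P_{0}$, which you can either prove by the block-peeling induction you sketch (which is sound --- the hockey stick identity does exactly the work needed) or simply quote from the Pascal-matrix literature the paper already cites. Your route is arguably more illuminating: it explains at a glance why the multipliers are $\frac{i}{i-1}$ rather than $1$, and it leans on a standard fact instead of a bespoke computation. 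What the paper's argument buys in exchange is self-containedness and an explicit closed form for every partial product. All the bookkeeping you flag (ordering of the blocks, taking adjoints to produce the $U$ side, the placement of $D$) checks out against the statement.
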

To prove this theorem, we first need a lemma.
\begin{lem}
For $1\leq k\leq n-1$, let $Y_k=\left[y^{(k)}_{ij}\right]$ be the $n\times n$ lower triangular matrix where
	$$y^{(k)}_{ij} = 
	\begin{cases}
	\frac{i}{j} \binom{i-(n-k)}{j-(n-k)}   & \,\, \text{if}\; \,\,   n-k\leq j<i\leq n,\\
	1 & \,\, \text{if}\; i=j,\\
	0 & \,\, \text{otherwise}
	.
	\end{cases}$$
	Then
	\begin{equation}
	    Y_k = \left(L_{n}\left(\frac{n}{n-1}\right)\cdots L_{n-(k-1)}\left(\frac{n-k+1}{n-k}\right)\right)\left(L_{n}\left(\frac{n}{n-1}\right)\cdots L_{n-(k-2)}\left(\frac{n-k+2}{n-k+1}\right)\right)\cdots\left(L_{n}\left(\frac{n}{n-1}\right)\right).\label{eq:Yk} 
	\end{equation}
\end{lem}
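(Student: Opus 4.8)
The plan is to induct on $k$. The base case $k=1$ is immediate: the right-hand side of \eqref{eq:Yk} is $L_n\!\left(\frac{n}{n-1}\right)=I+\frac{n}{n-1}E_{n,n-1}$, and since $y^{(1)}_{n,n-1}=\frac{n}{n-1}\binom{1}{0}=\frac{n}{n-1}$ while all other off-diagonal entries of $Y_1$ vanish, this is exactly $Y_1$.

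For the inductive step, the key structural observation is that the grouping of the bidiagonal factors in \eqref{eq:Yk} peels off the leftmost block: setting $B_k:=L_n\!\left(\frac{n}{n-1}\right)L_{n-1}\!\left(\frac{n-1}{n-2}\right)\cdots L_{n-k+1}\!\left(\frac{n-k+1}{n-k}\right)$, the remaining $k-1$ groups are precisely the right-hand side of \eqref{eq:Yk} with $k$ replaced by $k-1$. Hence, by the induction hypothesis, $Y_k=B_k\,Y_{k-1}$, and it suffices to verify that this product has the asserted entries.

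I would carry this out in two steps. First, record the closed form of the partial product: $(B_k)_{ij}=\frac{i}{j}$ for $n-k\le j<i\le n$, $(B_k)_{ii}=1$, and $(B_k)_{ij}=0$ otherwise. This follows by a short side induction on the number of factors, since right-multiplying $B_{k-1}$ by $L_{n-k+1}\!\left(\frac{n-k+1}{n-k}\right)$ performs the column operation $\mathrm{col}_{n-k}\mapsto\mathrm{col}_{n-k}+\frac{n-k+1}{n-k}\,\mathrm{col}_{n-k+1}$, which scales column $n-k+1$ (whose nonzero entries are $\frac{i}{n-k+1}$, with a $1$ at the top) and converts them into the entries $\frac{i}{n-k}$ of column $n-k$. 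Alternatively, and more cleanly, one may use $L_i\!\left(\frac{i}{i-1}\right)=D\,(I+E_{i,i-1})\,D^{-1}$ with $D=\text{diag}([i])$; since every factor in \eqref{eq:Yk} carries the argument $\frac{i}{i-1}$, the conjugating diagonals telescope and $Y_k=D\,\widehat Y_k\,D^{-1}$, where $\widehat Y_k$ is the same arrangement of the unit factors $I+E_{i,i-1}$. One is then reduced to the elementary fact (a one-line induction) that such a product equals the shifted Pascal matrix $\big[\binom{i-(n-k)}{j-(n-k)}\big]$, and conjugating back by $D$ reinstates the $\frac{i}{j}$ weights.

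Second, multiply $Y_k=B_kY_{k-1}$ entrywise. For $n-k+1\le j<i\le n$, every index $l$ contributing to $\sum_l(B_k)_{il}(Y_{k-1})_{lj}$ satisfies $j\le l\le i$, which lies in the range where both closed forms apply, so the sum equals $\sum_{l=j}^{i}\frac{i}{l}\cdot\frac{l}{j}\binom{l-(n-k+1)}{j-(n-k+1)}=\frac{i}{j}\sum_{l=j}^{i}\binom{l-(n-k+1)}{j-(n-k+1)}$, and the hockey-stick identity $\sum_{m=a}^{M}\binom{m}{a}=\binom{M+1}{a+1}$ collapses this to $\frac{i}{j}\binom{i-(n-k)}{j-(n-k)}=y^{(k)}_{ij}$. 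The remaining cases are easy: when $j=n-k$ the matrix $Y_{k-1}$ contributes only through its diagonal, giving $(B_kY_{k-1})_{ij}=(B_k)_{i,n-k}=\frac{i}{n-k}=y^{(k)}_{i,n-k}$; when $j<n-k$ both sides vanish for $i>j$; and the diagonal is $1$ on both sides. I expect the only real difficulty to be bookkeeping — pinning down the closed form of $B_k$ and keeping the index ranges ($j\le n-k$ versus $j\ge n-k+1$) straight — rather than anything substantive, and the diagonal-conjugation reduction is the neatest way to keep this under control.
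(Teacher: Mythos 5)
Your proof is correct and follows essentially the same route as the paper's: induct on $k$ by peeling off the leftmost block $B_k$, note that the surviving sum over $l$ runs from $j$ to $i$ with weights $\frac{i}{l}\cdot\frac{l}{j}$, and collapse it with the hockey-stick identity — the paper reaches the identical sum by applying $L_{n-k},\dots,L_n$ as successive row operations rather than by first recording the closed form $(B_k)_{ij}=\frac{i}{j}$. Your aside that $L_i\left(\frac{i}{i-1}\right)=D(I+E_{i,i-1})D^{-1}$ with $D=\text{diag}([i])$, so that the whole statement reduces to the unit-bidiagonal factorization of the shifted Pascal matrix, is a genuinely cleaner observation than anything in the paper, but you use it only as a remark and the main argument matches the published one.
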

\begin{proof}
 For $k=1$, the right hand side is $L_{n}\left(\frac{n}{n-1}\right)$, which is same as $Y_1$.  We show below that for $1\leq k\leq n-2$, 
	\begin{equation}
(\Ll_n\Ll_{n-1}\cdots\Ll_{n-k})Y_k=Y_{k+1},\label{induction}
	\end{equation}
where $\Ll_p$ denotes $L_p\left(\frac{p}{p-1}\right)$.
This will show that \eqref{eq:Yk} is true for $k=1,2,\ldots, n-1$.
We also keep note of the fact that multiplying $\Ll_p$ on the left of a matrix $A$ is applying the elementary row operation  $
        \text{row }{p} \rightarrow \text{row } {p}+\left(\frac{p}{p-1}\right)\times \text{row } ({p-1}) \label{row}
    $ on $A$, which we will use in the cases $2,3$ and $4$. We also note that for $k=n-2$, only cases $1$ and $4$ are relevant.
    
Case $1$: Let $1\leq i\leq j\leq n$. Since $\mathcal L_p$ and $Y_k$ are lower triangular matrices with diagonal entries 1, so is $(\Ll_n\Ll_{n-1}\cdots\Ll_{n-k})Y_k$. So the $(i,j)$th entry of both the matrices in \eqref{induction} is same.

Case $2$: Let $1\leq j\leq n-k-2$ and $j+1\leq i\leq n-k-1$. In this case, $y^{(k)}_{ij}=0$. Since multiplying $Y_k$ on the left by $\Ll_n\Ll_{n-1}\cdots\Ll_{n-k}$  will keep its rows $j+1,\ldots, n-k-1$ unchanged, we get that the $(i,j)$th entry of $(\Ll_n\Ll_{n-1}\cdots\Ll_{n-k})Y_k$ is zero. 
 
 Case $3$: Let $1\leq j\leq n-k-2$ and $ n-k \leq i\leq n$. Multiplying $Y_k$ on the left by $\Ll_{n-k},\Ll_{n-(k-1)},\ldots,\Ll_{n}$ successively, we get that the $(i,j)$th entry of $(\Ll_n\Ll_{n-1}\cdots\Ll_{n-k})Y_k$ is given by
 \begin{align}
& y^{(k)}_{ij}+\frac{i}{i-1}\left[y^{(k)}_{i-1,j}+\frac{i-1}{i-2}\left[y^{(k)}_{i-2,j}+\cdots  + \frac{n-k+1}{n-k} \left[y^{(k)}_{n-k,j}+\frac{n-k}{n-k-1}y^{(k)}_{n-k-1,j}\right]\right] \cdots \right]\nonumber\\
     &=y^{(k)}_{ij}+\frac{i}{i-1}y^{(k)}_{i-1,j}+\frac{i}{i-2}y^{(k)}_{i-2,j}+\cdots + \frac{i}{n-k-1}y^{(k)}_{n-k-1,j}.\label{ij entry}
 \end{align}
Now  $y^{(k)}_{pj}=0$ for all $1\leq j\leq n-k-2$ and $p\neq  j$. So the $(i,j)$th entry of $(\Ll_n\Ll_{n-1}\cdots\Ll_{n-k})Y_k$ is zero. 
 
 Case $4$: Let $i>j\geq n-k-1$. 
Again, the $(i,j)$th entry of $(\Ll_n\Ll_{n-1}\cdots\Ll_{n-k})Y_k$ is given by $y^{(k)}_{ij}+\frac{i}{i-1}y^{(k)}_{i-1,j}+\frac{i}{i-2}y^{(k)}_{i-2,j}+\cdots + \frac{i}{j}y^{(k)}_{jj}+\cdots+ \frac{i}{n-k-1}y^{(k)}_{n-k-1,j}.$ For $j=n-k-1$, $y^{(k)}_{pj}=0$ for $p\neq j$. So  the $(i,n-k-1)$th entry of $(\Ll_n\Ll_{n-1}\cdots\Ll_{n-k})Y_k$ is $\frac{i}{n-k-1}$. For $j\geq n-k$,  $y^{(k)}_{pj}=0$ for $p<j$ and $y^{(k)}_{pj}=\frac{p}{j}\binom{p-n+k}{j-n+k}$ for $p\geq j$. So we obtain that the $(i,j)$th entry of $(\Ll_n\Ll_{n-1}\cdots\Ll_{n-k})Y_k$ is
     \begin{align}
     &=\frac{i}{j}\binom{i-n+k}{j-n+k} +\frac{i}{i-1}\cdot \frac{i-1}{j}\binom{(i-1)-n+k}{j-n+k}+\frac{i}{i-2}\cdot \frac{i-2}{j}\binom{(i-2)-n+k}{j-n+k}+\cdots +\frac{i}{j}\nonumber\\
     &=\frac{i}{j}\bigg[\sum_{p=j}^{i}\binom{p-n+k}{j-n+k}\bigg].\label{case4}
\end{align}
Since $\sum\limits_{k=0}^n\binom{m+k}{m}=\binom{m+n+1}{m+1}, $
	the expression in \eqref{case4} is equal to $\frac{i}{j}\binom{i-n+k+1}{j-n+k+1}.$
In all the above four cases, the $(i,j)$th entry of $(\Ll_n\Ll_{n-1}\cdots\Ll_{n-k})Y_k$ is the same as that of $Y_{k+1}$. Hence we are done.
\end{proof}

	\textit{Proof of Theorem \ref{beta BD}.} By Theorem \ref{thm:Beta:CD:I} we have that $\mathcal B=\left[\binom{i}{j}\right] \ D\  \left[\binom{i}{j}\right]^*$. 
So it is enough to show that \begin{equation}
(\Ll_n\Ll_{n-1}\cdots\Ll_2)(\Ll_n\Ll_{n-1}\cdots\Ll_3)\cdots (\Ll_n\Ll_{n-1})(\Ll_n)=\left[\binom{i}{j}\right].\label{eq:BD}\end{equation} 
This is easily obtained by putting $k = n-1$ in $\eqref{eq:Yk}$.

 \qed




 
\begin{remark} Let $p_{0}, \ldots, p_{n-1}$ be functions from a set $\mathfrak X$ to a field  and $\la_{1}, \ldots, \la_{m} \in \mathfrak X$. Then the  $m \times n$ matrix defined by $ \left[p_{j-1}(\la_{i})\right]_{1 \leq i \leq m, 1 \leq j \leq n}$ is called  an {\em alternant matrix} \cite[p. 112]{aitken}. Let $\langle x \rangle_{n}=x (x+1)\cdots (x+n-1)$.
For positive integers $ 0 < \lambda_{1}< \cdots < \lambda_{n} $, we have
\begin{equation*}
\frac{1}{\beta(\lambda_{i},\lambda_{j})} = \frac{(\lambda_{i}+\lambda_{j}-1)!}{(\lambda_{i}-1)!(\lambda_{j}-1)!}=\frac{\langle \lambda_j \rangle_{\lambda_i}}{(\lambda_{i}-1)!}.
\end{equation*}
Thus with  $p_{j-1}(x)= \frac{\langle \lambda_j \rangle_{x}}{(x-1)!} $,
$ \left[\frac{1}{\beta(\lambda_{i},\lambda_{j})}\right] $ is an alternant matrix.
\end{remark}

 We now consider the more general matrix $\left[\frac{1}{\beta(\la_{i},\la_{j})}\right]$ for positive real numbers $\la_{1}, \la_{2}, \ldots ,\la_{n} $. The $(i,j)$th entry of this matrix is given by $\frac{\Gamma(\lambda_{i}+\lambda_j)}{\Gamma(\lambda_{i})\Gamma(\lambda_{j})}$. The proof for infinite divisibility of generalized Pascal matrix $\left[\frac{\Gamma(\lambda_{i}+\lambda_{j}+1)}{\Gamma(\lambda_{i}+1)\Gamma(\lambda_{j}+1)}\right]$ is given in \cite{R.Bhatia}. Infinite divisibility of $\left[\frac{1}{\beta(\la_{i},\la_{j})}\right]$ follows by a similar argument. Alternatively, one can also observe that $\left[\frac{1}{\beta(\la_{i},\la_{j})}\right]=\left[\frac{\Gamma(\lambda_{i}+\lambda_{j}+1)}{\Gamma(\lambda_{i}+1)\Gamma(\lambda_{j}+1)}\right]\circ \left[\frac{1}{\frac{1}{\lambda_{i}}+\frac{1}{\lambda_{j}}}\right]$ and deduce its infinite divisibility. This is also same as saying that $\frac{1}{\beta(\cdot,\cdot)}$ is an {\it infinitely divisible kernel} \cite{horn} on $\mathbb R^+\times \mathbb R^+$. 
  
We observe that $\frac{1}{\beta(\cdot,\cdot)}$ is also a {\it totally positive kernel} \cite{karlin} on $\mathbb R^+\times \mathbb R^+$.  For that we first show that  $\left[\Gamma(\lambda_{i}+\mu_{j})\right]$ is totally positive, where $ 0<\lambda_{1}<\dots<\lambda_{n} $ and $ 0<\mu_{1}<\dots<\mu_{n} $. The proof of this was guided to us by Abdelmalek Abdesselam and Mateusz Kwa\'snicki \footnote{https://mathoverflow.net/questions/306366/}.


\begin{theorem}\label{theorem 2.6}
Let $ 0<\lambda_{1}<\dots<\lambda_{n} $ and $ 0<\mu_{1}<\dots<\mu_{n} $ be positive real numbers. Then $\left[\Gamma(\lambda_{i}+\mu_{j})\right]$ is totally positive. 
\end{theorem}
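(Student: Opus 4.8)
The plan is to use the classical integral representation $\Gamma(s)=\int_0^\infty t^{s-1}e^{-t}\,dt$ to write the matrix as a ``continuous sum'' of rank-one totally positive pieces, and then invoke the basic composition formula (Cauchy--Binet for kernels) from Karlin's theory of total positivity. Concretely, with the substitution $t=e^{-x}$ one gets
\begin{equation*}
\Gamma(\lambda_i+\mu_j)=\int_{-\infty}^{\infty} e^{-\lambda_i x}\,e^{-\mu_j x}\,e^{-e^{-x}}\,dx .
\end{equation*}
So the kernel $K(\lambda,\mu)=\Gamma(\lambda+\mu)$ is obtained by composing the kernel $(\lambda,x)\mapsto e^{-\lambda x}$ with the (everywhere positive, hence trivially totally positive of all orders) multiplier $e^{-e^{-x}}$ and then with $(x,\mu)\mapsto e^{-\mu x}$. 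The map $(\lambda,x)\mapsto e^{-\lambda x}$ on $\mathbb R^+\times\mathbb R$ is strictly totally positive of all orders: for $0<\lambda_1<\cdots<\lambda_n$ and $x_1<\cdots<x_n$ the determinant $\det\bigl[e^{-\lambda_i x_j}\bigr]$ is a generalized Vandermonde determinant in the variables $e^{-x_j}$ with exponents $\lambda_i$, and such determinants are strictly positive (this is the standard fact that $\{y^{\lambda}\}$ forms a Chebyshev system on $(0,\infty)$, or one may cite Karlin's treatment of the exponential kernel directly).

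The key steps, in order, are: (1) record the integral representation above after the change of variables; (2) state the composition/Cauchy--Binet lemma for totally positive kernels in the form ``if $K_1(u,x)$ and $K_2(x,v)$ are $\mathrm{TP}_n$ and $\sigma(x)\ge 0$, then $\int K_1(u,x)\sigma(x)K_2(x,v)\,dx$ is $\mathrm{TP}_n$ provided the integrals converge'' (Karlin, \emph{Total Positivity}, Ch.\ 1); (3) verify the hypotheses: convergence of $\Gamma(\lambda_i+\mu_j)$ is automatic since $\lambda_i+\mu_j>0$, the multiplier $e^{-e^{-x}}$ is positive, and $e^{-\lambda x}$ is $\mathrm{TP}$ of all orders by the generalized Vandermonde computation; (4) conclude that $\bigl[\Gamma(\lambda_i+\mu_j)\bigr]$ is totally nonnegative, and then upgrade to \emph{strictly} totally positive. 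For the strictness one can either note that the composition of a \emph{strictly} $\mathrm{TP}$ kernel with a strictly positive density remains strictly $\mathrm{TP}$ (the relevant version of the composition theorem also covers strict inequalities when the density is positive on a set of positive measure), or argue directly that each $n\times n$ minor is an $n$-fold integral of a product of two strictly positive generalized Vandermonde determinants against the positive weight $\prod_k e^{-e^{-x_k}}$, hence strictly positive.

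Once the theorem is in hand, the promised consequences for $\tfrac1{\beta(\cdot,\cdot)}$ follow quickly and I would include them immediately after: writing $\tfrac{1}{\beta(\lambda_i,\mu_j)}=\Gamma(\lambda_i+\mu_j)\cdot\tfrac{1}{\Gamma(\lambda_i)}\cdot\tfrac{1}{\Gamma(\mu_j)}$ exhibits $\bigl[\tfrac1{\beta(\lambda_i,\mu_j)}\bigr]$ as the totally positive matrix $\bigl[\Gamma(\lambda_i+\mu_j)\bigr]$ pre- and post-multiplied by the positive diagonal matrices $\mathrm{diag}\bigl([1/\Gamma(\lambda_i)]\bigr)$ and $\mathrm{diag}\bigl([1/\Gamma(\mu_j)]\bigr)$, and multiplying a totally positive matrix by positive diagonal matrices preserves total positivity. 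This gives the ``totally positive kernel'' assertion; combined with the Hadamard-product observation already made in the text (that $\tfrac1{\beta}$ factors as the generalized Pascal kernel times the Cauchy kernel $1/(\tfrac1{\lambda_i}+\tfrac1{\mu_j})$) one gets both the infinite divisibility when $\lambda=\mu$ and total positivity in general.

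The main obstacle I anticipate is not any single computation but the bookkeeping needed to apply the composition theorem cleanly: one must make sure the cited version of Karlin's basic composition formula applies to kernels on unbounded intervals with the correct integrability, and one must be careful to get \emph{strict} positivity of the minors rather than mere nonnegativity --- the cleanest route is probably the direct one, expanding an $n\times n$ minor by Cauchy--Binet/Andr\'eief into $\tfrac{1}{n!}\int\cdots\int \det[e^{-\lambda_i x_k}]\,\det[e^{-\mu_j x_k}]\,\prod_k e^{-e^{-x_k}}\,dx_k$ over $\mathbb R^n$, and noting that after symmetrization the integrand is a nonnegative function that is strictly positive on the region $x_1<\cdots<x_n$, so the integral is strictly positive. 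Establishing strict positivity of the generalized Vandermonde determinant $\det[e^{-\lambda_i x_k}]$ (equivalently $\det[y_k^{\lambda_i}]$ with $0<y_1<\cdots<y_n$) for real exponents is the one genuinely ``classical'' fact invoked, and I would cite it rather than reprove it.
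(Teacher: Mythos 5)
Your proposal is correct and follows essentially the same route as the paper: write $\Gamma(\lambda+\mu)$ as an integral composition of two totally positive kernels against the positive measure $e^{-t}\,dt/t$, apply Karlin's basic composition formula, and conclude strict positivity of each minor from strict positivity of the two Vandermonde-type determinants. The only cosmetic difference is that you substitute $t=e^{-x}$ to reduce to the exponential kernel $e^{-\lambda x}$, whereas the paper works directly with the power kernels $K_1(x,y)=x^y$ and $K_2(x,y)=y^x$ and cites Pinkus for their strict total positivity.
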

\begin{proof}

	Since all the minors of $\left[\Gamma(\lambda_{i}+\mu_{j})\right]$ are also of the same form, it is enough to show that $\det\left( \left[\Gamma(\lambda_{i}+\mu_{j})\right]\right)>0.$ 
Let	$ K_{1}(x,y) = x^{y}$ and $K_{2}(x,y) = y^x $. 
	For any $ \lambda, \mu \in \R^{+} $,
	\begin{eqnarray}
		\Gamma(\lambda+\mu)&=& \int\limits_{0}^{\infty} e^{-t}t^{\lambda+\mu-1} dt\nonumber\\
		&=& \int\limits_{0}^{\infty}t^{\lambda +\mu} \left(\frac{e^{-t}}{t}\right) dt\nonumber\\
		&=& \int\limits_{0}^{\infty}t^{\lambda} t^{\mu} \sigma(dt), \quad \text{where}\,\, \sigma(dt) = \frac{e^{-t}}{t}dt\nonumber\\
		&=&\int_{0}^{\infty}K_{2}(\lambda,t)K_{1}(t,\mu)\sigma(dt).\label{K2K1}
	\end{eqnarray}
	
	Let $0<t_{1}<\dots<t_{n}$.  Then by  \eqref{K2K1} and the \textit{basic composition formula}  \cite[p. 17]{karlin} , 
	\begin{align*}
	\det\left(\left[\Gamma(\lambda_{i}+\mu_{j})\right]\right)  &= 
	\int_{t_{1}=0}^{\infty}\ldots \int_{t_{n}=0}^{\infty} \det\left([K_{2}(\lambda_{i},t_{j})]\right) 
	\times 
	\det\left([K_{1}(t_{i},\mu_{j})]\right)
	\sigma(dt_{1})\cdots \sigma(dt_{n}).
	\end{align*}
	Since  $K_1$ and $K_2$ are totally positive kernels on $ \R^{+}\times \R^{+}$ (see \cite[p. 90]{pinkus}),
	$\det\left([K_{2}(\lambda_{i},t_{j})] \right)$
	and $
 \det\left([K_{1}(t_{i},\mu_{j})]\right) $ are positive functions of $ t_{1}, \ldots, t_{n} $.  So $ \det \left(\left[\Gamma(\lambda_{i}+\mu_{j})\right]\right) >0 $.
\end{proof}
\begin{cor} \label{TP}
For positive real numbers $0<\la_{1}< \la_{2}< \cdots <\la_{n} $ and $0<\mu_1<\mu_2<\cdots<\mu_n$, the matrix $\left[\frac{1}{\beta(\la_{i},\mu_{j})}\right]$ is  totally positive.\end{cor}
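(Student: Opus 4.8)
The plan is to reduce Corollary \ref{TP} to Theorem \ref{theorem 2.6} by a diagonal congruence (more precisely, a two-sided diagonal scaling). First I would rewrite the entries using the definition of the beta function: for each $i,j$,
\[
\frac{1}{\beta(\la_i,\mu_j)} = \frac{\Gamma(\la_i+\mu_j)}{\Gamma(\la_i)\,\Gamma(\mu_j)}.
\]
Since $\Gamma$ is positive on $\R^+$, the diagonal matrices $D_1 = \text{diag}\left(\left[\tfrac{1}{\Gamma(\la_i)}\right]\right)$ and $D_2 = \text{diag}\left(\left[\tfrac{1}{\Gamma(\mu_j)}\right]\right)$ have positive diagonal entries, and
\[
\left[\frac{1}{\beta(\la_i,\mu_j)}\right] = D_1\,\left[\Gamma(\la_i+\mu_j)\right]\,D_2.
\]

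Next I would invoke Theorem \ref{theorem 2.6} to conclude that $\left[\Gamma(\la_i+\mu_j)\right]$ is totally positive. The remaining step is the elementary observation that total positivity is preserved under multiplication by positive diagonal matrices on either side: if $A$ is totally positive and $D_1,D_2$ are positive diagonal matrices, then by multilinearity of the determinant every minor of $D_1 A D_2$ equals the corresponding minor of $A$ times a product of the (positive) diagonal entries of $D_1$ indexed by the chosen rows and of $D_2$ indexed by the chosen columns; hence every minor of $D_1 A D_2$ is positive. Applying this with $A = \left[\Gamma(\la_i+\mu_j)\right]$ yields that $\left[\frac{1}{\beta(\la_i,\mu_j)}\right]$ is totally positive.

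There is essentially no obstacle here: all the real content sits in Theorem \ref{theorem 2.6}, and the corollary is the cosmetic step of peeling off the $\Gamma(\la_i)$ and $\Gamma(\mu_j)$ factors by a diagonal scaling. The only point requiring a moment of care is to state the minor-transformation rule correctly (rows rescaled by the entries of $D_1$, columns by those of $D_2$), which is immediate. One could alternatively note that $\tfrac{1}{\beta(\cdot,\cdot)}$ is a totally positive kernel on $\R^+\times\R^+$ directly, but the diagonal-scaling argument from Theorem \ref{theorem 2.6} is the shortest route.
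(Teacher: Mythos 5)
Your proposal is correct and is exactly the paper's argument: the paper also writes $\left[\frac{1}{\beta(\la_i,\mu_j)}\right]$ as $\mathrm{diag}\left(\left[\frac{1}{\Gamma(\la_i)}\right]\right)\left[\Gamma(\la_i+\mu_j)\right]\mathrm{diag}\left(\left[\frac{1}{\Gamma(\mu_j)}\right]\right)$ and invokes Theorem \ref{theorem 2.6} together with preservation of total positivity under positive diagonal scaling. Your spelled-out justification of the minor-transformation rule is a harmless elaboration of the same one-line proof.
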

\begin{proof}

Since $\mathcal{B} = \text{diag}\left(\left[ \frac{1}{\Gamma(\lambda_{i})}\right]\right)\ \left[\Gamma(\lambda_i+\mu_j)\right] \text{diag}\left(\left[ \frac{1}{\Gamma(\mu_{i})}\right]\right)$, we obtain the required result.
\end{proof}
	
\section{Combinatorial matrices}\label{section:Bell}

The unsigned Stirling matrix of first kind $\mathfrak{s}$ is totally nonnegative as well as invertible. Theorem 2.2.2 in \cite{fallat} says that every invertible totally nonnegative matrix can be written as $(L_n\left(l_{k}\right)L_{n-1}\left(l_{k-1}\right)\cdots L_2\left(l_{k-n+2}\right))\\ (L_n\left(l_{k-n+1}\right)L_{n-1}\left(l_{k-n}\right) \cdots L_3\left(l_{k-2n+4}\right))\cdots(L_n\left(l_{1}\right))D (U_n\left(u_{1}\right)) (U_{n-1}\left(u_{2}\right) U_n\left(u_{3}\right))\cdots  (U_2\left(u_{k-n+2}\right)\cdots \\  U_{n-1}\left(u_{k-1}\right)U_n\left(u_{k}\right)),$ where $k=\binom{n}{2},\  l_i,u_j\geq 0$ for all $i,j\in\{1,2,\ldots,k\}$ and 
	$D=\text{diag}([d_i])$
	is a diagonal matrix with all $d_i >0$. The below  proposition gives that $u_j=0$ for $\mathfrak{s}$, which is not surprising in view of \cite[Theorem 7]{johnson99}.

\begin{pro}\label{stirling}
  The $n \times n$ unsigned Stirling matrix of first kind $\mathfrak{s}$ can be factorized as
\begin{equation}
 \mathfrak{s} = (L_{n}({n-1})L_{n-1}({n-2})\cdots L_{2}({1})) (L_{n}({n-2})L_{n-1}({n-3})\cdots L_{3}({1}))\cdots(L_{n}({2})L_{n-1}({1}))(L_{n}({1})).
\end{equation}
\end{pro}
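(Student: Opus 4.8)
The plan is to peel off the first parenthesized factor $L_{n}(n-1)L_{n-1}(n-2)\cdots L_{2}(1)$ and to recognize the product of the remaining blocks as a shifted copy of the same factorization one size smaller, so that an induction on $n$ closes. Throughout write $\mathfrak{s}_{N}$ for the $N\times N$ unsigned Stirling matrix of the first kind, so $\mathfrak{s}=\mathfrak{s}_{n}$ and $(\mathfrak{s}_{N})_{ij}=(-1)^{i-j}s(i,j)=|s(i,j)|$; these entries satisfy the Pascal-type recurrence
\[
(\mathfrak{s}_{N})_{ij}=(\mathfrak{s}_{N})_{i-1,j-1}+(i-1)(\mathfrak{s}_{N})_{i-1,j},
\]
under the convention that an entry whose column index is $0$, or exceeds its row index, is $0$. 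Recall also that left multiplication by $L_{i}(c)=I+cE_{i,i-1}$ carries out the row operation $\mathrm{row}_{i}\mapsto\mathrm{row}_{i}+c\,\mathrm{row}_{i-1}$, and that $L_{i}(c)$ has the block form $I_{1}\oplus(\,\cdot\,)$ exactly when $i\ge3$ (for $i=2$ it disturbs the first column).

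The first step would be to establish the one-block identity: for every $N\ge2$,
\[
\bigl(L_{N}(N-1)L_{N-1}(N-2)\cdots L_{2}(1)\bigr)\bigl(I_{1}\oplus\mathfrak{s}_{N-1}\bigr)=\mathfrak{s}_{N},
\]
where $I_{1}\oplus\mathfrak{s}_{N-1}$ denotes the $N\times N$ matrix whose first row and first column are those of the identity and which carries $\mathfrak{s}_{N-1}$ in rows and columns $2,\dots,N$. This I would prove by performing the $N-1$ row operations encoded on the left one at a time, in the order $L_{2}(1),L_{3}(2),\dots,L_{N}(N-1)$, and showing by induction on $k$ that right after applying $L_{k+1}(k)$ the rows $1,\dots,k+1$ have become exactly rows $1,\dots,k+1$ of $\mathfrak{s}_{N}$ while rows $k+2,\dots,N$ still agree with $I_{1}\oplus\mathfrak{s}_{N-1}$. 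The first row is $(1,0,\dots,0)$ for the duration, which is row $1$ of $\mathfrak{s}_{N}$; and just before $L_{k+1}(k)$ is applied, row $k+1$ equals $(0,|s(k,1)|,\dots,|s(k,k)|,0,\dots,0)$ while row $k$ already equals $(|s(k,1)|,\dots,|s(k,k)|,0,\dots,0)$, so adding $k$ times the latter to the former yields, entry by entry and via the recurrence above (with $|s(k,0)|=0$ filling in the first entry and the $1$ in column $k+1$ left untouched), precisely $(|s(k+1,1)|,\dots,|s(k+1,k+1)|,0,\dots,0)$, i.e.\ row $k+1$ of $\mathfrak{s}_{N}$. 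As $L_{k+1}(k)$ changes no other row, the invariant persists; taking $k=N-1$ gives the identity.

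For the induction on $n$, write $B_{m}^{(n)}=L_{n}(n-m)L_{n-1}(n-m-1)\cdots L_{m+1}(1)$, so that the right-hand side of the proposition is $B_{1}^{(n)}B_{2}^{(n)}\cdots B_{n-1}^{(n)}$; I want this to equal $\mathfrak{s}_{n}$. The case $n=1$ is the empty product. Assume the equality in size $n-1$. For $2\le m\le n-1$ every factor of $B_{m}^{(n)}$ is some $L_{i}(c)$ with $i\ge m+1\ge3$, so $B_{m}^{(n)}=I_{1}\oplus\widehat{B}_{m}$ with $\widehat{B}_{m}$ an $(n-1)\times(n-1)$ matrix; moreover, shifting the indices $2,\dots,n$ down to $1,\dots,n-1$ turns $L_{i}(c)$ into $L_{i-1}(c)$ and hence identifies $\widehat{B}_{m}$ with $B_{m-1}^{(n-1)}$. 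Consequently
\[
B_{2}^{(n)}B_{3}^{(n)}\cdots B_{n-1}^{(n)}=I_{1}\oplus\bigl(B_{1}^{(n-1)}B_{2}^{(n-1)}\cdots B_{n-2}^{(n-1)}\bigr)=I_{1}\oplus\mathfrak{s}_{n-1}
\]
by the inductive hypothesis, and therefore, using the one-block identity with $N=n$,
\[
B_{1}^{(n)}B_{2}^{(n)}\cdots B_{n-1}^{(n)}=\bigl(L_{n}(n-1)\cdots L_{2}(1)\bigr)\bigl(I_{1}\oplus\mathfrak{s}_{n-1}\bigr)=\mathfrak{s}_{n},
\]
which completes the induction.

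The main obstacle is the one-block identity of the first step: one has to pick the loop invariant correctly and check that sweeping the row operations in the prescribed bottom-up order reproduces the recurrence for $|s(n,k)|$ one row at a time — in particular keeping straight which rows are already in Stirling form and which are still in their initial state, and handling the two boundary columns, column $1$ (where the recurrence's missing term $|s(k,0)|$ is $0$) and column $k+1$ (which stays a $1$). Everything in the induction step is then routine: the block-diagonal observation together with the index shift.
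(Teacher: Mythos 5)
Your proof is correct and is essentially the paper's argument run in the forward direction: the paper inverts each $L_i(s)$ to $L_i(-s)$ and shows the reversed product collapses $\mathfrak{s}_{n+1}$ to $1\oplus\mathfrak{s}_n$ via the same recurrence $\mathfrak s_{i+1,j}=\mathfrak s_{i,j-1}+i\,\mathfrak s_{ij}$, then uses the same $I_1\oplus(\cdot)$ block structure of $L_i$ for $i\ge 3$ to induct on $n$. Your one-block identity and index-shift observation are exactly the multiplicative counterparts of the paper's equation \eqref{eqn:2} and its block-decomposition step, so no substantive difference.
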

\begin{proof}
Since $(L_{i}(s))^{-1} = L_{i}(-s)$, so it is enough to show that
\begin{eqnarray} 
    & &\left(L_{n}({-1})\right)\ \left(L_{n-1}({-1})L_{n}({-2})\right)\cdots\left(L_{3}({-1})\cdots L_{n-1}({-(n-3)})L_{n}({-(n-2)})\right) \left(L_{2}({-1})\cdots\right.\nonumber\\ & &\hspace{8cm}\left. L_{n-1}({-(n-2)})L_{n}({-(n-1)})\right)\mathfrak{s} = I_n, \label{eqn:1}
\end{eqnarray}
where $I_n$ is the $n\times n$ identity matrix.
We shall prove \eqref{eqn:1} by induction on $n$. For clarity, we shall denote the $n\times n$ matrices $\mathfrak s$ and $L_i(s)$, by $\mathfrak s_n$ and $L_i(s)_{(n)}$, respectively. For $n=2$, $\mathfrak{s}_2=\begin{bmatrix}1 & 0\\1&1\end{bmatrix}$, which is clearly equal to $L_2(1)_{(2)}$. Let us assume that \eqref{eqn:1} holds for $n$.
We have the following recurrence relation \cite[p. 166]{Henry} for $\mathfrak s_{ij}$ :
\begin{eqnarray}
&&\mathfrak s_{00} = 1, \mathfrak s_{0j} = 0, \mathfrak s_{i0} = 0\nonumber\\
&&\mathfrak s_{i+1,j} = \mathfrak s_{i,j-1}+i\ \mathfrak s_{ij}.\label{recurrence}
\end{eqnarray}
So for $1\leq k\leq n$, 
multiplying $\mathfrak{s}_{n+1}$ on the left by $(L_{k+1}({-k}))_{(n+1)}$  replaces its $(k+1)$th row by the row whose first element is $0$ and $j$th element is the $(j-1)$th element of the previous row. 
So we get            
\begin{equation} \label{eqn:2}
    \left(L_{2}({-1})_{(n+1)}\cdots L_{n}({-(n-1)})_{(n+1)}L_{n+1}({-n})_{(n+1)}\right)\mathfrak{s}_{n+1} = \begin{bmatrix}
    1&0\\
    0&\mathfrak{s}_{n}
    \end{bmatrix}.
\end{equation}
 It is easy to see that 
\begin{equation*}
    L_{i}({s})_{(n+1)} = \begin{bmatrix}
        1 & 0\\
        0 & L_{i-1}({s})_{(n)}
    \end{bmatrix}.
\end{equation*}
Hence 
\begin{align*} 
   & \left(L_{n+1}({-1})_{(n+1)}\right)\left(L_{n}({-1})_{(n+1)}L_{n+1}({-2})_{(n+1)}\right)\cdots\left(L_{3}({-1})_{(n+1)}\cdots L_{n}({-(n-2)})_{(n+1)}L_{n+1}({-(n-1)})_{(n+1)}\right) \\
   &\hspace{0.2cm}= \begin{bmatrix}
        1 & 0\\
        0 & \left(L_{n}({-1})_{(n)}\right)\left(L_{n-1}({-1})_{(n)}L_{n}({-2})_{(n)}\right)\cdots\left(L_{2}({-1})_{(n)}\cdots L_{n-1}({-(n-2)})_{(n)}L_{n}({-(n-1)})_{(n)}\right)
    \end{bmatrix}.
\end{align*} Using induction hypothesis and \eqref{eqn:2}, we obtain 
\begin{align*}
    &\left(L_{n+1}({-1})_{(n+1)}\right)\left(L_{n}({-1})_{(n+1)}L_{n+1}({-2})_{(n+1)}\right)\cdots\left(L_{3}({-1})_{(n+1)}\cdots L_{n}({-(n-2)})_{(n+1)}L_{n+1}({-(n-1)})_{(n+1)}\right)\\
    &\left(L_{2}({-1})_{(n+1)}\cdots L_{n}({-(n-1)})_{(n+1)} L_{n+1}({-n})_{(n+1)}\right)\mathfrak{s}_{n+1} = I_{n+1}.
\end{align*}
\end{proof}

As an immediate consequence, we obtain the following.
\begin{theorem}
The symmetrized unsigned Stirling matrix of first kind $\mathfrak s \mathfrak s^*$ is totally positive.
\end{theorem}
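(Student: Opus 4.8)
The plan is to read off the required positivity of all minors of $\mathfrak s\mathfrak s^*$ from Proposition \ref{stirling} together with the Cauchy--Binet formula. The SEB factorization of $\mathfrak s$ produced in Proposition \ref{stirling} involves only lower elementary bidiagonal factors $L_i(l)$, all with strictly positive multipliers $l\in\{1,2,\dots,n-1\}$, and has identity diagonal part; hence $\mathfrak s$ is a lower triangular \emph{triangular totally positive} matrix in the sense of \cite[p.~3]{fallat}. Concretely, for index sets $I=\{i_1<\dots<i_r\}$ and $K=\{k_1<\dots<k_r\}$ in $\{1,\dots,n\}$, the minor $\det\mathfrak s[I\mid K]$ vanishes when $k_t>i_t$ for some $t$ (forced by lower triangularity) and is strictly positive whenever $k_t\le i_t$ for every $t$ --- this is precisely what an all-positive Neville factorization with positive diagonal yields; see \cite[Ch.~2]{fallat}.

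Next I would apply Cauchy--Binet: since $\mathfrak s$ is real, for any $r$ and any $I=\{i_1<\dots<i_r\}$, $J=\{j_1<\dots<j_r\}$,
\[
\det\big((\mathfrak s\mathfrak s^*)[I\mid J]\big)=\sum_{K}\det\mathfrak s[I\mid K]\,\det\mathfrak s[J\mid K],
\]
the sum running over all $r$-element subsets $K\subseteq\{1,\dots,n\}$. Each summand is nonnegative because $\mathfrak s$ is totally nonnegative, so it remains to produce one strictly positive term. Take $K=\{k_1<\dots<k_r\}$ with $k_t=\min(i_t,j_t)$: since $(i_t)$ and $(j_t)$ are strictly increasing, $k_{t+1}\ge\min(i_t+1,j_t+1)>k_t$, and $1\le k_1$, $k_r\le i_r\le n$, so $K$ is a genuine $r$-element subset of $\{1,\dots,n\}$, with $k_t\le i_t$ and $k_t\le j_t$ for every $t$. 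By the first paragraph, $\det\mathfrak s[I\mid K]>0$ and $\det\mathfrak s[J\mid K]>0$, whence $\det\big((\mathfrak s\mathfrak s^*)[I\mid J]\big)>0$. As $I,J,r$ were arbitrary, $\mathfrak s\mathfrak s^*$ is totally positive.

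The step requiring the most care is the passage, in the first paragraph, from the explicit factorization of Proposition \ref{stirling} to the claim that exactly the minors of $\mathfrak s$ with $k_t\le i_t$ are positive; this is the standard bidiagonal-factorization characterization of triangular totally positive matrices, and one should check (a routine count) that all $\binom n2$ multipliers in Proposition \ref{stirling} occur and are strictly positive. An alternative that sidesteps describing the individual minors of $\mathfrak s$ is to invoke the general fact from \cite{fallat} that the product of a lower triangular triangular-TP matrix with an upper triangular triangular-TP matrix is totally positive, and apply it to $\mathfrak s$ and $\mathfrak s^*$.
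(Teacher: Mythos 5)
Your argument is correct, but it takes a genuinely different route from the paper's. The paper's proof is a one-liner: Proposition \ref{stirling} writes $\mathfrak s$ as a product of $\binom{n}{2}$ lower elementary bidiagonal factors with positive multipliers, so $\mathfrak s\mathfrak s^*=(\text{$L$-factors})\,I\,(\text{$U$-factors})$ is literally in the form required by Corollary 2.2.3 of \cite{fallat} (the $U$-factors being the transposes of the $L$-factors taken in reverse order, which lands exactly in the prescribed pattern), and that corollary is an if-and-only-if characterization of total positivity --- no minors of $\mathfrak s$ ever need to be examined. You instead first upgrade the factorization to the statement that $\mathfrak s$ is lower triangular totally positive (all non-trivially-zero minors positive) and then run Cauchy--Binet with the witness $K$ given by $k_t=\min(i_t,j_t)$; your verification that $K$ is a valid strictly increasing index set with $k_t\le i_t$ and $k_t\le j_t$ is correct, and the remaining summands are indeed nonnegative by total nonnegativity of $\mathfrak s$. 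In effect you reprove, in this special case, the general fact that a lower triangular-TP matrix times an upper triangular-TP matrix is TP, which is Corollary 2.4.2 of \cite{fallat} --- precisely the tool the paper invokes for the symmetrized Stirling matrix of the second kind. What your route buys is a self-contained, explicit positivity certificate for each minor of $\mathfrak s\mathfrak s^*$; what it costs is reliance on the (standard, citable, but heavier) characterization of triangular-TP matrices by all-positive Neville factorizations, which you correctly flag as the step needing care. Your closing alternative (cite the triangular-TP product theorem directly) is essentially the paper's proof of the second-kind analogue, and either version is a complete proof.
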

\begin{proof}
 This  follows from the above Proposition \ref{stirling} and Corollary 2.2.3 of \cite{fallat}.
\end{proof}

We now show that symmetrized Stirling matrix of second kind $\mathcal S \mathcal S^*$ is totally positive. For that we first prove that $\mathcal S$ is triangular totally positive.  For $\alpha=\{\alpha_1,\ldots,\alpha_p\}$, $ \gamma =\{\gamma_{1}, \ldots, \gamma_{p}\}$ with $1 \leq \alpha_{1} < \cdots < \alpha_{p} \leq n$ and $1 \leq \gamma_{1}< \cdots < \gamma_{p} \leq n$, let $A[\alpha,\gamma]$ denotes the submatrix of  $A$ obtained by picking rows $\alpha_1,\ldots,\alpha_p$ and columns $\gamma_1,\ldots,\gamma_p$ of $A$. The \textit{dispersion} of  $\alpha$, denoted by $d(\alpha)$, is defined as $d(\alpha) = \alpha_{p}-\alpha_{1}-(p-1)$.  Note that $d(\alpha)=0$ if and only if $\alpha_1,\alpha_2,\ldots,\alpha_p$ are consecutive $p$ numbers. We denote by $\alpha'$ the set $\{\alpha_{1}+1,\alpha_{2}+1, \ldots, \alpha_{p}+1\}$, and by $\sigma^{(p)}$ the set $\{1,\ldots,p\}$.
\begin{pro}\label{Stirling II}
The Stirling matrix of second kind $\mathcal S$ is triangular totally positive.
\end{pro}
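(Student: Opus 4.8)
The plan is to write $\mathcal S$ explicitly as a product of elementary lower bidiagonal matrices, all with strictly positive multipliers and with no diagonal factor, and then to invoke the standard fact that a lower triangular matrix admitting such a factorization is triangular totally positive (the lower-triangular version of the bidiagonal factorization stated before Proposition~\ref{stirling}; see \cite{fallat}). Concretely, I will prove that
\begin{equation*}
\mathcal S=(L_n(1)L_{n-1}(1)\cdots L_2(1))(L_n(2)L_{n-1}(2)\cdots L_3(2))\cdots(L_n(n-2)L_{n-1}(n-2))(L_n(n-1)),
\end{equation*}
i.e.\ the $t$-th group is $L_n(t)L_{n-1}(t)\cdots L_{t+1}(t)$ for $t=1,\dots,n-1$. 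Every multiplier occurring here is the positive integer $t$, so this is an SEB factorization of the required type and triangular total positivity of $\mathcal S$ follows at once.

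To verify the factorization, recall that left multiplication by $L_i(t)$ performs the row operation $\text{row } i\to\text{row } i+t\times\text{row }(i-1)$, so evaluating the product from right to left amounts to performing, for $t=n-1,n-2,\dots,1$ in succession, these operations for $i=t+1,t+2,\dots,n$ in increasing order of $i$. Write $h_m(x_1,\dots,x_k)=\sum_{a_1+\cdots+a_k=m}x_1^{a_1}\cdots x_k^{a_k}$ for the complete homogeneous symmetric polynomial of degree $m$, and recall the classical identity $S(i,j)=h_{i-j}(1,2,\dots,j)$, which is immediate by induction from the recurrence $S(i,j)=S(i-1,j-1)+j\,S(i-1,j)$ (itself a consequence of $x^n=\sum_k S(n,k)(x)_k$). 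For $1\le t\le n-1$ set
\begin{equation*}
Q_t=(L_n(t)\cdots L_{t+1}(t))(L_n(t+1)\cdots L_{t+2}(t+1))\cdots(L_n(n-1)),
\end{equation*}
so that $Q_{n-1}=L_n(n-1)$ and $Q_1$ is the full product. I will show, by downward induction on $t$, that $Q_t$ is lower triangular, that its first $t-1$ columns agree with those of the identity, and that $(Q_t)_{ij}=h_{i-j}(t,t+1,\dots,j)$ for $t\le j\le i\le n$. Since $Q_t=L_n(t)\cdots L_{t+1}(t)\,Q_{t+1}$ and the row operations are carried out with increasing row index, one obtains $(Q_t)_{ij}=(Q_{t+1})_{ij}+t\,(Q_t)_{i-1,j}$ for $t+1\le i\le n$, rows $1,\dots,t$ remaining unchanged; substituting the inductive description of $Q_{t+1}$ and, via an inner induction on $i$, of the lower rows of $Q_t$, this becomes exactly the symmetric-function recurrence $h_{i-j}(t,\dots,j)=h_{i-j}(t+1,\dots,j)+t\,h_{i-j-1}(t,\dots,j)$, with $h_{i-t}(t)=t^{\,i-t}$ handling the boundary column $j=t$. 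Specializing to $t=1$ gives $Q_1=[\,h_{i-j}(1,\dots,j)\,]=\mathcal S$.

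The arithmetic is routine, and the only point requiring care is the ordering of the elementary operations within a group: they must be applied for $i=t+1,\dots,n$ in increasing order, so that $\text{row }(i-1)$ of $Q_t$ has already reached its final form when $\text{row } i$ is updated. This is precisely what produces the clean recursion $(Q_t)_{ij}=(Q_{t+1})_{ij}+t\,(Q_t)_{i-1,j}$ and also what keeps the leading $t-1$ columns equal to those of the identity, thereby making the boundary cases $j<t$ and $j=t$ of the entry formula come out right. (Alternatively, the same Stirling recurrence shows that $\mathcal S$ is the matrix whose successive rows are $(1,0,\dots,0)T^{k}$ for $k=0,1,\dots,n-1$, where $T=\text{diag}(1,2,\dots,n)+\sum_{i=1}^{n-1}E_{i,i+1}$ is upper bidiagonal; one could try to deduce triangular total positivity from this description, but the explicit bidiagonal factorization above is the more direct route.)
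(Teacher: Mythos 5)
Your proof is correct, but it takes a genuinely different route from the paper's. The paper verifies Cryer's determinantal criterion directly: by Theorem 3.1 of \cite{colin} it suffices to show that the initial minors $\det(\mathcal S[\alpha,\sigma^{(p)}])$ with $d(\alpha)=0$ are positive, and this is done inductively via the identity $\mathcal S[\alpha',\sigma^{(p)}]=\mathcal S[\alpha,\sigma^{(p)}]T$ (a repackaging of the recurrence $S(\ell,m)=m\,S(\ell-1,m)+S(\ell-1,m-1)$), which multiplies each such determinant by $\det T=p!$ at every step. You instead produce the explicit successive elementary bidiagonal factorization $\mathcal S=(L_n(1)\cdots L_2(1))(L_n(2)\cdots L_3(2))\cdots(L_n(n-1))$, verified through the identity $S(i,j)=h_{i-j}(1,\dots,j)$; I checked your induction on $Q_t$ (the ordering of the row operations within a group, the boundary column $j=t$, and the symmetric-function recurrence $h_{i-j}(t,\dots,j)=h_{i-j}(t+1,\dots,j)+t\,h_{i-j-1}(t,\dots,j)$) and it is right. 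Your route is the exact analogue of Proposition \ref{stirling} for $\mathfrak s$ and therefore buys more: it exhibits the SEB factorization of $\mathcal S$ itself, and it makes the total positivity of $\mathcal S\mathcal S^*$ follow from Corollary 2.2.3 of \cite{fallat} exactly as in the first-kind case, with no need for Corollary 2.4.2. The only point needing care is the citation of the fact that a lower triangular matrix with an all-positive-multiplier SEB factorization is triangular totally positive; if you cannot locate that triangular statement verbatim, you can route around it by noting that your factorization together with its transpose shows $\mathcal S\mathcal S^*$ is TP (Corollary 2.2.3 of \cite{fallat}), and Cryer's $LU$ characterization of TP matrices in \cite{colin} then forces the unit lower triangular factor $\mathcal S$ to be triangular totally positive. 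In short: the paper's argument is shorter and needs only the recurrence; yours is more constructive and unifies the treatment of the two Stirling matrices.
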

 \begin{proof} 
 By Theorem 3.1 of \cite{colin}, $\mathcal S$ is triangular totally positive if and only if $\det (\mathcal S[\alpha, \sigma^{(p)}])>0$ for all $1\leq p \leq n$ and for all $\alpha=\{\alpha_1,\ldots,\alpha_p\}$ satisfying $1 \leq \alpha_{1} < \cdots < \alpha_{p} \leq n$ and $d(\alpha) = 0$. For $p = n$, this is obviously true. Let $1\leq p < n$.
 If $\alpha_{1}=1$ and $d(\alpha)=0$, then $\alpha=\sigma^p$ and $\det(\mathcal S[\sigma^{(p)},\sigma^{(p)}]) = 1 >0$. Next we show that if $\det (\mathcal S[\alpha, \sigma^{(p)}])>0$,
where $d(\alpha)=0$, then $\det (\mathcal S[\alpha', \sigma^{(p)}])>0$ (and $d(\alpha')=0$). 
 
 Let $T=[t_{ij}]$ be defined as $t_{ij} = \begin{cases}
& i \quad \text{if $i = j$}\\
& 1 \quad \text{if $j-i = 1$}\\
& 0 \quad \text{otherwise}
\end{cases}$.
 We prove that \begin{equation}
    \mathcal S[\alpha',\sigma^{(p)}]=\mathcal S[\alpha, \sigma^{(p)}]T  \text{ for } 1 \leq p < n. \label{st}
 \end{equation} The $(i,j)${th} entry of $\mathcal S[\alpha, \sigma^{(p)}]T$ is $j\, S(\alpha_{i},j)+S(\alpha_{i},j-1)$.
 The Stirling numbers of second kind satisfy the following recurrence relation: $$S(0,0) = 1;$$ $$ \text{for } \ell,m\geq 1,\ S(0,m)=0=S(\ell,0), \ S(\ell,m) = m\,S(\ell-1,m)+S(\ell-1,m-1).$$  
Thus the $(i,j)${th} entry of $\mathcal S[\alpha, \sigma^{(p)}]T$ is $= S(\alpha_{i}+1,j)$, which is also the $(i,j)${th} entry of $\mathcal S[\alpha', \sigma^{(p)}]$. Hence \eqref{st} holds, which gives that $ \det(\mathcal S[\alpha',\sigma^{(p)}])=p! \det (\mathcal S[\alpha, \sigma^{(p)}]) >0$. 
\end{proof}


\begin{theorem}
The symmetrized Stirling matrix of second kind $\mathcal S \mathcal S^*$ is totally positive.
\end{theorem}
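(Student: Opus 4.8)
The plan is to combine Proposition \ref{Stirling II} with the elementary bidiagonal factorization machinery of \cite{fallat}, exactly as was done for the first-kind matrix $\mathfrak s\mathfrak s^*$. The point is that ``triangular totally positive'' is not merely a collection of minor inequalities: a unit lower triangular matrix is triangular totally positive precisely when it can be written as $\big(L_n(l_k)L_{n-1}(l_{k-1})\cdots L_2(l_{k-n+2})\big)\big(L_n(l_{k-n+1})\cdots L_3(l_{k-2n+4})\big)\cdots\big(L_n(l_1)\big)$ with $k=\binom n2$ and every multiplier $l_i$ strictly positive (the lower-triangular analogue of the factorization underlying Corollary 2.2.3 of \cite{fallat}). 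Since $S(i,i)=1$, the matrix $\mathcal S$ is unit lower triangular, so Proposition \ref{Stirling II} furnishes such a factorization with all $l_i>0$.

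I would then pass to adjoints. Because $L_i(s)^*=U_i(s)$, reversing the order of the factors gives $\mathcal S^*=\big(U_n(l_1)\big)\big(U_{n-1}(l_2)U_n(l_3)\big)\cdots\big(U_2(l_{k-n+2})\cdots U_{n-1}(l_{k-1})U_n(l_k)\big)$, which is an upper-triangular successive elementary bidiagonal product with positive multipliers, arranged in exactly the order required on the right-hand side of the factorization in \cite[Corollary 2.2.3]{fallat}. Multiplying $\mathcal S$ by $\mathcal S^*$ therefore exhibits $\mathcal S\mathcal S^*$ in the form $(\text{positive lower bidiagonals})\,D\,(\text{positive upper bidiagonals})$ with $D=I$, i.e.\ precisely the SEB form of \cite[Corollary 2.2.3]{fallat} with all $l_i,u_j>0$. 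That corollary then yields that $\mathcal S\mathcal S^*$ is totally positive.

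The step I expect to be the crux is the structural claim in the first paragraph: one must be sure that triangular total positivity of $\mathcal S$ (as obtained in Proposition \ref{Stirling II} via \cite{colin}) really does produce an \emph{all-positive} bidiagonal factorization in the precise Neville order needed to invoke Corollary 2.2.3, and not merely a nonnegative one, which would only give total nonnegativity. Once the strictness and the ordering of the factors are nailed down, everything else is routine bookkeeping with the identity $L_i(s)^*=U_i(s)$. If one wished to bypass this appeal, an alternative would be to extract an explicit SEB factorization of $\mathcal S$ directly from the recurrence $S(\ell,m)=m\,S(\ell-1,m)+S(\ell-1,m-1)$, in the spirit of Proposition \ref{stirling}, but that is more laborious and unnecessary here.
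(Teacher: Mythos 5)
Your argument is correct, but it takes a genuinely different route from the paper, whose entire proof is the one line ``This follows from Proposition \ref{Stirling II} and Corollary 2.4.2 of \cite{fallat}'' --- that corollary is the Cryer-type $LU$ statement that the product of a triangular totally positive lower triangular matrix with a triangular totally positive upper triangular matrix is totally positive, so no bidiagonal factorization is needed at all. You instead reconstruct this fact through the SEB machinery: you convert triangular total positivity of the unit lower triangular matrix $\mathcal S$ (note $S(i,i)=1$, so $\mathcal S$ is indeed unit triangular) into an all-positive one-sided Neville factorization, take adjoints using $L_i(s)^*=U_i(s)$ to get the upper factors in exactly the order demanded by \cite[Corollary 2.2.3]{fallat}, and conclude with $D=I$. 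The step you correctly flag as the crux --- that triangular total positivity of a unit lower triangular matrix is \emph{equivalent} to an SEB factorization with all multipliers strictly positive, not merely nonnegative --- is a true theorem (the Neville multipliers are ratios of the nontrivial initial minors), but it is itself a nontrivial citation from \cite{fallat} or \cite{johnson99} rather than a formal consequence of Corollary 2.2.3 as stated; once you supply that reference your proof is complete. What your route buys is an explicit structural parallel with the treatment of $\mathfrak s\mathfrak s^*$ via Proposition \ref{stirling}; what the paper's route buys is brevity, since Corollary 2.4.2 packages precisely the implication you re-derive.
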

\begin{proof}
This follows from Proposition \ref{Stirling II} and Corollary 2.4.2 of \cite{fallat}.
\end{proof}

Next, we show that $\mathfrak B= [\mathfrak w({i+j})]$ is totally positive. Let $Y = \left(y_{ij}\right)_{i,j=0}^{n-2}$ be the lower triangular  matrix defined recursively by $y_{00} = 1,\ y_{0j}= 0 \text{ for } j>0, \text{ and }y_{ij} = y_{i-1,j-1}+(j+2)y_{i-1,j}+(j+1)y_{i-1,j+1} \text{ for } i\geq 1,$ where $y_{i,-1}=0$ for every $i$ and $y_{in}=0$ for $0\leq i\leq n-3$.

\begin{theorem}
 The matrix $ \mathfrak{B} = [\mathfrak w({i+j})] $ is totally positive.                                                                            
\end{theorem}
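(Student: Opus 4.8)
The plan is to use the fact that $\mathfrak B=[\mathfrak w(i+j)]_{i,j=0}^{n-1}$ is a Hankel matrix and to apply exactly the criterion that was used above for $\mathcal G$, namely \cite[Theorem 4.4]{pinkus}: a Hankel matrix $[c_{i+j}]_{i,j=0}^{n-1}$ is totally positive provided both it and its once-shifted companion $[c_{i+j+1}]_{i,j=0}^{n-2}$ are positive definite. Here $c_m=\mathfrak w(m)$, and the shifted companion is the matrix obtained from $\mathfrak B$ by deleting its last row and first column, that is, $[\mathfrak w(i+j+1)]_{i,j=0}^{n-2}$. So it suffices to show that both $\mathfrak B$ and $[\mathfrak w(i+j+1)]_{i,j=0}^{n-2}$ are positive definite.

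That $\mathfrak B$ is positive definite is immediate: applying the determinant formula $\det[\mathfrak w(i+j)]_{i,j=0}^{k-1}=\prod_{i=0}^{k-1}i!$ of \cite{Martin} to every leading principal submatrix shows that all leading principal minors of $\mathfrak B$ are positive. (Equivalently, $\mathfrak B=LL^*$ with $L$ invertible, by the Cholesky decomposition recalled in the introduction.)

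The substantive point, and the reason for introducing the auxiliary matrix $Y$, is the positive definiteness of the shifted matrix $[\mathfrak w(i+j+1)]_{i,j=0}^{n-2}$. I would establish the Cholesky-type identity
\[
[\mathfrak w(i+j+1)]_{i,j=0}^{n-2}=Y\,\text{diag}([i!])\,Y^*.
\]
Two routes are available. One is a direct induction on the order, in the spirit of Lemma 2 of \cite{Martin}, proving $\sum_{\ell}y_{i\ell}\,\ell!\,y_{j\ell}=\mathfrak w(i+j+1)$ from the defining recurrence of $Y$ together with the recurrence for the Bell numbers. The cleaner route is to observe that $Y=[\binom{i}{j}]\,X$, where $X$ is the Bell triangle of the introduction: comparing the two recurrences, the extra $+1$ in the middle coefficient of the recurrence for $y_{ij}$ is precisely the effect of left multiplication by the Pascal matrix $[\binom{i}{j}]$. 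Then, using $\mathfrak B=X\,\text{diag}([i!])\,X^*$ from \cite{Martin},
\[
Y\,\text{diag}([i!])\,Y^*=[\tbinom{i}{j}]\,\mathfrak B\,[\tbinom{i}{j}]^*=\left[\sum_{k,\ell}\binom{i}{k}\binom{j}{\ell}\mathfrak w(k+\ell)\right],
\]
and the identity reduces to the Bell-number convolution identity $\sum_{k,\ell}\binom{i}{k}\binom{j}{\ell}\mathfrak w(k+\ell)=\mathfrak w(i+j+1)$, which I would verify from the exponential generating function $\sum_{m\ge 0}\mathfrak w(m)\frac{x^m}{m!}=e^{e^x-1}$, or equivalently from Dobi\'nski's realization of $\mathfrak w(m)$ as the $m$th moment of a Poisson$(1)$ random variable $N$ together with $\E[Nf(N)]=\E[f(N+1)]$. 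Since $Y$ is lower triangular with all diagonal entries equal to $1$ (immediate from its recurrence), it is invertible, so $Y\,\text{diag}([i!])\,Y^*$ — and every leading principal submatrix of it, whose determinant is the positive number $\prod i!$ — is positive definite.

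With both $\mathfrak B$ and $[\mathfrak w(i+j+1)]_{i,j=0}^{n-2}$ shown positive definite, \cite[Theorem 4.4]{pinkus} gives that $\mathfrak B$ is totally positive. I expect the main obstacle to be the factorization identity for the shifted Hankel matrix, equivalently the Bell-number convolution identity (the analogue of Lemma 2 of \cite{Martin}); once that is in hand, the rest is routine bookkeeping.
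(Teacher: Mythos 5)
Your proof is correct and follows essentially the same route as the paper: both apply the Hankel criterion of \cite[Theorem 4.4]{pinkus} after establishing that $\mathfrak B$ and its shifted companion $[\mathfrak w(i+j+1)]_{i,j=0}^{n-2}$ are positive definite via the Cholesky-type factorizations through $X$ and $Y$. The only difference is that the paper cites the factorization $[\mathfrak w(i+j+1)]=Y\,\text{diag}([i!])\,Y^*$ from the proof in \cite{Martin}, whereas you supply a correct self-contained derivation of it via $Y=\left[\binom{i}{j}\right]X$ and the Bell-number convolution identity.
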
                                       
\begin{proof}                                       
Let $\mathfrak{B}(n,1) = [\mathfrak{w}(i+j+1)]_{i,j=0}^{n-2}$ be the matrix obtained from $\mathfrak B$ by deleting its first column and $n$th row.
From the proof of the Theorem in  \cite{Martin}, $\mathfrak{B} = LL^*$, where $L=X \ \text{diag} \left(\left[\sqrt{i!}\right]\right)_{i=0}^{n-1}$, and  $\mathfrak{B}(n,1) = L' L'^*$, where $L'=Y \ \text{diag} \left(\left[\sqrt{i!)}\right]\right)_{i=0}^{n-2}$. Hence $\mathfrak{B}$ and $\mathfrak{B}(n,1)$ are positive semidefinite. The  Theorem in \cite{Martin} also shows that both $\mathfrak B$ and $\mathfrak B(n,1)$ are nonsingular, hence they are positive definite. Since $\mathfrak B$ is a Hankel matrix, the result now follows from Theorem 4.4 of \cite{pinkus}.
 
\end{proof}

Now we show that $\mathfrak B$ is infinitely divisible only upto order 4.  For $A=[a_{ij}]$, let $\log A = [\log(a_{ij})].$ Let $\Delta A$ denote the $(n-1)\times (n-1)$ matrix $[a_{ij}+a_{i+1,j+1}-a_{i+1,j}-a_{i,j+1}]_{i,j=1}^{n-1}$.


\begin{theorem}
The $n \times n$  matrix $\mathfrak{B}$ is infinitely divisible if and only if $ n \leq 4 $. 
\end{theorem}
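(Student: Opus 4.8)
The plan is to use the standard characterization of infinite divisibility via the conditionally positive semidefinite property of $\log \mathfrak B$, combined with a determinantal criterion for conditional positive semidefiniteness. Recall that for a symmetric matrix $A$ with positive entries, $A$ is infinitely divisible if and only if $\log A = [\log a_{ij}]$ is conditionally positive semidefinite, i.e. $x^*(\log A)x \geq 0$ for all $x$ with $\sum_i x_i = 0$. Equivalently (see \cite{R.Bhatia, horn}), $\mathfrak B$ is infinitely divisible if and only if the $(n-1)\times(n-1)$ matrix $\Delta(\log \mathfrak B)$, whose $(i,j)$ entry is $\log \mathfrak w(i+j) + \log \mathfrak w(i+j+2) - \log \mathfrak w(i+j+1) - \log \mathfrak w(i+j+1)$, is positive semidefinite; that is, $\mathfrak B$ is infinitely divisible iff $\left[\log\frac{\mathfrak w(i+j)\,\mathfrak w(i+j+2)}{\mathfrak w(i+j+1)^2}\right]$ is positive semidefinite.

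First I would state this reduction carefully, citing the relevant fact from \cite{R.Bhatia} (infinite divisibility $\iff$ $\log$ is conditionally positive semidefinite) and the observation that conditional positive semidefiniteness of $[\,b_{ij}\,]$ for a Hankel-type pattern $b_{ij}=c_{i+j}$ is controlled by the second-difference matrix $\Delta[\log \mathfrak B]$. Then for the \emph{if} direction ($n\le 4$) I would simply exhibit, for $n=4$, that the relevant $3\times 3$ matrix $M=\left[\log\frac{\mathfrak w(i+j)\mathfrak w(i+j+2)}{\mathfrak w(i+j+1)^2}\right]_{i,j=0}^{2}$ is positive semidefinite, by writing out the Bell numbers $\mathfrak w(0),\dots,\mathfrak w(6) = 1,1,2,5,15,52,203$, computing the entries of $M$ numerically, and checking its three leading principal minors are nonnegative (equivalently, Sylvester's criterion); since every principal submatrix of a PSD matrix is PSD, this also covers $n=2,3$.

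For the \emph{only if} direction ($n\ge 5 \Rightarrow$ not infinitely divisible) I would show that the $4\times 4$ matrix $M'=\left[\log\frac{\mathfrak w(i+j)\mathfrak w(i+j+2)}{\mathfrak w(i+j+1)^2}\right]_{i,j=0}^{3}$, which arises from $n=5$, is \emph{not} positive semidefinite — concretely, by computing $\det M' < 0$ (or exhibiting a negative principal minor / an explicit vector $v$ with $v^*M'v<0$), using $\mathfrak w(0),\dots,\mathfrak w(8) = 1,1,2,5,15,52,203,877,4140$. Since $M'$ being a principal submatrix of the corresponding matrix for any $n\ge 5$, the failure of positive semidefiniteness propagates upward, so $\mathfrak B$ fails to be infinitely divisible for all $n\ge5$. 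I expect the main obstacle to be purely computational bookkeeping: the entries $\log\frac{\mathfrak w(i+j)\mathfrak w(i+j+2)}{\mathfrak w(i+j+1)^2}$ are irrational, so one must either carry enough decimal precision to certify the sign of a $3\times3$ minor (positive) and a $4\times4$ minor (negative) rigorously, or — cleaner — exponentiate back and argue directly about positive semidefiniteness of $\mathfrak B^{\circ r}$ for small fractional $r$, i.e. find an explicit $r\in(0,1)$ and an explicit vector making $x^*\mathfrak B^{\circ r}x<0$ at $n=5$. I would present the determinant computations as the spine of the argument and relegate the arithmetic with the Bell numbers to a short displayed table, noting that exact rational arithmetic on the ratios $\frac{\mathfrak w(i+j)\mathfrak w(i+j+2)}{\mathfrak w(i+j+1)^2}$ makes the sign determinations exact.
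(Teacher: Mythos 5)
Your proposal is correct and essentially the paper's own argument: the paper likewise reduces to the cases $n=4$ and $n=5$ via principal submatrices, proves infinite divisibility of $\mathfrak B_4$ by checking that the $3\times 3$ matrix $\Delta\log\mathfrak B_4$ has positive leading principal minors (invoking Horn's criterion), and for $n\ge 5$ takes the second of your two suggested routes, namely the direct computation $\det\bigl(\mathfrak B_5^{\circ(1/4)}\bigr)=-1.62352\times 10^{-9}<0$. One small caution: in the $n\le 4$ step you should verify that the leading principal minors are \emph{strictly} positive (they are), since mere nonnegativity of leading principal minors does not certify positive semidefiniteness; with that adjustment your plan, including the alternative of showing the $4\times4$ matrix $\Delta\log\mathfrak B_5$ is not positive semidefinite, goes through.
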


\begin{proof}
We denote the $n\times n$ matrix $\mathfrak B$ by $\mathfrak B_n$. Since $\mathfrak B_n$  is a principal submatrix of  $\mathfrak B_{n+1}$, it is enough to show that $\mathfrak B_4$  is infinitely divisible but $\mathfrak B_5$ is not infinitely divisible.

 By Corollary 1.6 and Theorem 1.10 of \cite{horn}, to prove infinite divisibility of $\mathfrak B_4$, it is enough to prove that $\Delta \log \mathfrak B_{4} $ is positive definite. 
Now \begin{eqnarray*}& \mathfrak{B}_{4}=	
\begin{bmatrix}
1&1&2&5\\1&2&5&15\\2&5&15&52\\5&15&52&203 
\end{bmatrix},\ \log \mathfrak B_{4} = \begin{bmatrix}
0&0&\log 2&\log 5\\0&\log 2&\log 5&\log 15\\\log 2&\log 5&\log 15&\log 52\\\log 5&\log 15&\log 52&\log 203
\end{bmatrix}, \\ & \text{ and } \Delta \log \mathfrak B_{4} = \begin{bmatrix}
\log 2&\log (5/4)&\ \log(6/5)\\ \log(5/4)&\log(6/5)&\log(52/45)\\\log(6/5)&\log(52/45)&\log(3045/2704)
\end{bmatrix}.
\end{eqnarray*}

Since all the leading principal minors of $\Delta \log  \mathfrak B_{4}$ are positive, we get that $\Delta\log \mathfrak B_{4}$ is positive definite. Hence $\mathfrak B_4$ is infinitely divisible.
Since $\det\left(\mathfrak{B}_5^{\circ (\frac{1}{4})}\right) = -1.62352 \times 10^{-9} <0$, $\mathfrak B_5$ is not infinitely divisible. 
\end{proof}

\textbf{Acknowledgement} {\it The possibility of studying these problems was suggested to us by Rajendra Bhatia. It is a pleasure to record our thanks to him. We are also very thankful to the anonymous referee for several helpful comments.}

\end{document}